\documentclass[12pt]{amsart}

\usepackage{fullpage}
\usepackage{amsfonts}
\usepackage{enumerate}
\usepackage{xcolor}
\usepackage{verbatim}

\newcommand{\R}{\mathbb{R}}
\newcommand{\C}{\mathbb{C}}
\newcommand{\N}{\mathbb{N}}

\newcommand{\Q}{\mathbb{Q}}

\newcommand{\ve}{\varepsilon}
\newcommand{\lan}{\langle}
\newcommand{\ran}{\rangle}

\numberwithin{equation}{section}
\theoremstyle{plain} 

\newtheorem{theorem}{Theorem}[section]
\newtheorem{corollary}[theorem]{Corollary}
\newtheorem{lemma}[theorem]{Lemma}
\newtheorem{proposition}[theorem]{Proposition}

\theoremstyle{definition}
\newtheorem{definition}{Definition}[section]

\theoremstyle{remark}
\newtheorem{remark}{Remark}[section]
\newtheorem{ex}{Example}[section]

\begin{document}

\title{Lyapunov's Theorem for continuous frames}

\author{Marcin Bownik}

\address{Department of Mathematics, University of Oregon, Eugene, OR 97403--1222, USA}
\address{
Institute of Mathematics, Polish Academy of Sciences, ul. Wita Stwosza 57,
80--952 Gda\'nsk, Poland}
\email{mbownik@uoregon.edu}

\date{\today}

\keywords{continuous frame, Lyapunov's theorem, positive operator-valued measure}

\subjclass[2000]{Primary: 42C15, 46G10, Secondary: 46C05}

\thanks{The author was partially supported by NSF grant  DMS-1665056 and by a grant from the Simons Foundation \#426295. }

\begin{abstract} 
Akemann and Weaver (2014) have shown a remarkable extension of Weaver's $KS_r$ Conjecture (2004) in the form of approximate Lyapunov's theorem. This was made possible thanks to the breakthrough solution of the Kadison-Singer problem by Marcus, Spielman, and Srivastava (2015). In this paper we show a similar type of Lyapunov's theorem for continuous frames on non-atomic measure spaces. In contrast with discrete frames, the proof of this result does not rely on the recent solution of the Kadison-Singer problem.
\end{abstract}

\maketitle

\section{Introduction}

The classical Lyapunov's theorem states that the range of a non-atomic vector-valued measure with values in $\R^n$ is a convex and compact subset of $\R^n$. In contrast, the range of a vector measure with values in an infinite dimensional Banach spaces might not be convex. This leads to the problem of identifying vector-valued measures that have convex range. Some early results on this topic can be found in the monograph of Diestel and Uhl \cite[Chap. IX]{DU}. For example, Uhl's theorem \cite{Uh} gives sufficient conditions for the convexity of the closure of the range of a non-atomic vector-valued measure.

Kadets and Schechtman \cite{KS2} introduced the Lyapunov property of a Banach space as follows: the closure of a range of every non-atomic vector measure is convex. They have shown that $c_0$ space and $\ell^p$ spaces for $1\le p <\infty$, $p\ne 2$, satisfy the Lyapunov property. However, it is known that $\ell^2$ fails this property. The counterexample is $L^2([0,1])$-valued measure that assigns to any measurable $E \subset [0,1]$ a characteristic function $\chi_E$.

The other interest in Lyapunov's theorem comes from operator algebras in the work of Akemann and Anderson \cite{AA} who investigated the connection with the long-standing Kadison-Singer problem \cite{KS}. The breakthrough solution of the Kadison-Singer problem by Marcus, Spielman, and Srivastava \cite{MSS} has had a great impact on the area. A remarkable result of Akemann and Weaver \cite{AW} is an interesting generalization of newly confirmed Weaver's $KS_r$ Conjecture \cite{We} in the form of approximate Lyapunov's theorem. Their result states that the set of all partial frame operators corresponding to a given frame (or more generally a Bessel sequence) in a Hilbert space $\mathcal H$ forms an approximately convex subset of $\mathcal B(\mathcal H)$. The degree of approximation is dependent on how small the norms of frame vectors are. The exact formulation can be found in Section \ref{S3}.

In this paper we study a related problem for continuous frames defined on non-atomic measure spaces.  A concept of continuous frame, which is a generalization of the usual (discrete) frame, was proposed independently by Ali, Antoine, and Gazeau \cite{aag} and by G. Kaiser \cite{Ka}, see also \cite{aag2, FR, GH}.

\begin{definition}\label{cf} Let $\mathcal H$ be a separable Hilbert spaces and let $(X,\mu)$ be a measure space. A family of vectors $\{\phi_t\}_{t\in X}$ is a {\it continuous frame} over $X$ for $\mathcal H$ if:
\begin{enumerate}[(i)]
\item for each $f\in \mathcal H$, the function $X \ni t \mapsto \langle f , \phi_t \rangle \in \C$ is measurable, and
\item there are constants $0<A \le B< \infty$, called {\it frame bounds}, such that 
\begin{equation}\label{cf1}
A||f||^2 \le \int_X |\langle f, \phi_t \rangle|^2 d\mu (t) \le B ||f||^2 \qquad\text{for all }f\in\mathcal H.
\end{equation}
\end{enumerate}
When $A=B$, the frame is called {\it tight}, and when $A=B=1$, it is a {\it continuous Parseval frame}. More generally, if only the upper bound holds in \eqref{cf1}, that is $A=0$, we say that $\{\phi_t\}_{t\in X}$ is a {\it continuous Bessel family} with bound $B$.
\end{definition}

Every continuous frame defines a positive operator-valued measure (POVM) on $X$, see \cite{MHC}. To any measurable subset $E\subset X$, we assign a partial frame operator $S_{\phi,E}$ given by
\[
S_{\phi,E} f  = \int_E \lan f, \phi_t \ran \phi_t d\mu(t) \qquad\text{for } f\in\mathcal H.
\]
The main result of this paper, Theorem \ref{lyu}, shows that the closure of the range of such POVM is convex if $\mu$ is non-atomic. This result should be contrasted with the special case of POVM known as spectral measure or projection-valued measure (PVM). Such measures appear in the formulation of the spectral theorem for self-adjoint, or more generally, normal operators. The range of PVM is far from being convex since it consists solely of projections. In particular it contains zero $\mathbf 0$ and identity $\mathbf I$ operators, but not $\frac{1}{2}\mathbf I$. This naturally leads  to the problem of classifying those POVMs for which the closure of the range is convex. In Section \ref{S4} we show an extension of our main theorem to POVMs given by measurable positive compact operator-valued mappings.

Unlike the approximate Lyapunov's theorem for discrete frames by Akemann and Weaver \cite{AW}, its counterpart for continuous frames does not rely on the solution of the Kadison-Singer problem. This might initially look surprising, but it is consistent with the past experience. Indeed, Kadison and Singer \cite{KS} have shown that pure states on continuous MASA (maximal abelian self-adjoint algebra) in general have non-unique extensions to the entire algebra $\mathcal B(\mathcal H)$. In fact, the same is true for MASA with non-trivial continuous component. In contrast, the same problem for  discrete MASA has been a very challenging topic of research with a large number of equivalent formulations, see \cite{Bow, CT}. Finally, it is worth mentioning another recent result about continuous frames which actually relies on the solution of the Kadison-Singer problem. Freeman and Speegle \cite{FS} have solved the discretization problem posed by Ali, Antoine, and Gazeau \cite{aag2}. This problem asks which continuous frames can be sampled to yield a discrete frame.

\section{Measure theoretic reductions}

We start by making some remarks about measurability condition in Definition \ref{cf}.

\begin{remark}\label{rcf}
Since $\mathcal H$ is separable, by the Pettis Measurability Theorem \cite[Theorem II.2]{DU}, the weak measurability (i) is equivalent to (Bochner) strong measurability on $\sigma$-finite measure spaces $X$. That is, $t \mapsto \phi_t$ is a pointwise a.e. limit of simple measurable functions. Moreover, by \cite[Corollary II.3]{DU}, every measurable function $\phi: X\to \mathcal H$ is a.e. uniform limit of a sequence of countably-valued measurable functions. Although these results were stated in \cite{DU} for finite measure spaces, they also hold for $\sigma$-finite measure spaces.
\end{remark}

Since we work only with separable Hilbert spaces, we can safely assume that the measure space $(X,\mu)$ is $\sigma$-finite. Indeed, by Proposition \ref{p1} every continuous frame, or more generally a continuous Bessel family, is supported on a $\sigma$-finite set.

\begin{proposition}\label{p1} Suppose that $\{\phi_t\}_{t\in X}$ is a continuous Bessel family, then its support
$
\{t\in X: \phi_t \ne 0\}
$
is a $\sigma$-finite subset of $X$.
\end{proposition}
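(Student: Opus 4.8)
The plan is to exploit the Bessel bound together with the separability of $\mathcal H$ in order to exhibit the support as a countable union of measurable sets of finite measure.

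First I would fix an orthonormal basis $\{e_n\}_{n\in\N}$ of $\mathcal H$. Since $\phi_t=0$ holds precisely when $\langle \phi_t, e_n\rangle =0$ for every $n$, the support decomposes as
\[
\{t\in X: \phi_t\ne 0\} = \bigcup_{n\in\N}\bigcup_{k\in\N} E_{n,k}, \qquad E_{n,k} := \{t\in X: |\langle e_n,\phi_t\rangle| > 1/k\}.
\]
By the measurability condition (i) of Definition \ref{cf}, each function $t\mapsto \langle e_n, \phi_t\rangle$ is measurable, and hence every level set $E_{n,k}$ is measurable.

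Next I would estimate $\mu(E_{n,k})$ using the Bessel inequality \eqref{cf1} with $A=0$. Applying it to $f=e_n$ gives $\int_X |\langle e_n,\phi_t\rangle|^2\,d\mu(t) \le B$, so a Chebyshev-type argument yields
\[
\mu(E_{n,k}) \le k^2 \int_{E_{n,k}} |\langle e_n,\phi_t\rangle|^2\,d\mu(t) \le k^2 B < \infty.
\]
Since the support is thereby written as a countable union of the measurable sets $E_{n,k}$, each of finite measure, it is $\sigma$-finite, which is the desired conclusion.

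I do not anticipate a serious obstacle here. The only point requiring care is measurability of the level sets, but this is guaranteed directly by condition (i) of Definition \ref{cf}, so the sets $E_{n,k}$ are legitimately measurable and the Chebyshev estimate applies without difficulty.
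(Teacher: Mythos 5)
Your proposal is correct and follows essentially the same route as the paper: fix an orthonormal basis, write the support as a countable union of level sets of the measurable functions $t\mapsto\langle e_n,\phi_t\rangle$, and bound each level set's measure via Chebyshev's inequality applied to the Bessel bound. The only cosmetic difference is that you threshold $|\langle e_n,\phi_t\rangle|$ at $1/k$ (giving the bound $k^2B$) whereas the paper thresholds the square at $1/n$ (giving $nB$); this changes nothing.
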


\begin{proof}
Let $\{e_i\}_{i\in I}$ be an orthonormal basis of $\mathcal H$, where the index set $I$ is at most countable. For any $n\in \N$ and $i\in I$, by Chebyshev's inequality \eqref{cf1} yields
\[
\mu(\{t\in X: |\langle e_i, \phi_t \rangle|^2> 1/n \}) \le Bn<\infty.
\]
Hence, the set 
\[
\{t\in X: \phi_t \ne 0\}= \bigcup_{i\in I} \bigcup_{n\in\N} \{t\in X: |\langle e_i, \phi_t \rangle|^2> 1/n \} 
\]
is a countable union of sets of finite measure.
\end{proof}

It is convenient to define a concept of a weighted frame operator as follows. This is a special case of a continuous frame multiplier introduced by Balazs, Bayer, and Rahimi \cite{BBR}; for a discrete analogue, see \cite{Ba}.

\begin{definition}
Suppose that $\{\phi_t\}_{t\in X}$ is a continuous Bessel family.
For any measurable function $\tau: X \to [0,1]$, 
define a {\it weighted frame operator}
\[
S_{\sqrt{\tau}\phi,X} f= \int_X \tau(t) \langle f, \phi_t \rangle \phi_t d\mu(t)
\qquad f\in \mathcal H.
\]
\end{definition}

\begin{remark} A quick calculation shows that $\{\sqrt{\tau(t)} \phi_t\}_{t\in X}$ is also a continuous Bessel family with the same bound as $\{\phi_t\}_{t\in X}$. Hence, a weighted frame operator is merely the usual frame operator associated to $\{\sqrt{\tau(t)} \phi_t\}_{t\in X}$.
\end{remark}

Using Proposition \ref{p1} we will deduce the following approximation result for continuous frames.

\begin{lemma}\label{approx}
Let $(X,\mu)$ be a measure space and let $\mathcal H$ be a separable Hilbert space.
Suppose that $\{\phi_t\}_{t\in X}$ is a continuous Bessel family in $\mathcal H$. Then for every $\ve>0$, there exists a continuous Bessel family $\{\psi_t\}_{t\in X}$, which takes only countably many values, such that for any measurable function $\tau: X \to [0,1]$ we have
\[
||S_{\sqrt{\tau}\phi,X} - S_{\sqrt{\tau}\psi,X}||<\ve.
\]
\end{lemma}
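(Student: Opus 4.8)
The plan is to construct $\psi$ by approximating $\phi$ so finely that the difference family $\eta_t := \phi_t - \psi_t$ has a \emph{small Bessel bound}, and then to bound the operator norm entirely in terms of that bound, uniformly in $\tau$. Writing $B$ for the Bessel bound of $\{\phi_t\}$ and $B_\eta$ for that of $\{\eta_t\}$, the algebraic identity
\[
\langle f,\phi_t\rangle\phi_t - \langle f,\psi_t\rangle\psi_t = \langle f,\phi_t\rangle\eta_t + \langle f,\eta_t\rangle\phi_t - \langle f,\eta_t\rangle\eta_t
\]
splits $S_{\sqrt\tau\phi,X}-S_{\sqrt\tau\psi,X}$ into three integral operators. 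Applying Cauchy--Schwarz to each (after writing $\tau=\sqrt\tau\cdot\sqrt\tau$ and using $\tau\le 1$), together with the Bessel estimates $\int_X\tau|\langle f,\phi_t\rangle|^2 d\mu\le B\|f\|^2$ and $\int_X\tau|\langle f,\eta_t\rangle|^2 d\mu\le B_\eta\|f\|^2$, I expect the clean bound
\[
\|S_{\sqrt\tau\phi,X}-S_{\sqrt\tau\psi,X}\| \le 2\sqrt{B\,B_\eta}+B_\eta ,
\]
valid for every $\tau$. Thus it suffices to produce a countably-valued Bessel family $\psi$ with $B_\eta$ as small as we please.

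Constructing such a $\psi$ is where the only real difficulty lies. Naively invoking Remark \ref{rcf} to approximate $\phi$ uniformly, $\sup_t\|\phi_t-\psi_t\|\le\delta$, is \emph{not} enough: it only gives $\int_X|\langle f,\eta_t\rangle|^2 d\mu\le\delta^2\|f\|^2\mu(\supp\phi)$, which is useless when $\mu(\supp\phi)=\infty$. To get around this I use Proposition \ref{p1} to write the support as a disjoint union $\supp\phi=\bigsqcup_k A_k$ with $\mu(A_k)<\infty$, and then approximate on each piece with its own precision. Concretely, applying Remark \ref{rcf} on each $A_k$ yields a countably-valued measurable $\psi$ with $\|\phi_t-\psi_t\|\le\delta_k$ for a.e. $t\in A_k$, where I choose $\delta_k$ so small that $\delta_k^2\,\mu(A_k)\le\delta^2 2^{-k}$ (and set $\psi_t=0$ off the support, so that $\psi$ is countably-valued overall). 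Summing the per-piece estimates then gives
\[
\int_X|\langle f,\eta_t\rangle|^2 d\mu \le \sum_k \delta_k^2\,\mu(A_k)\,\|f\|^2 \le \delta^2\|f\|^2,
\]
so that $B_\eta\le\delta^2$.

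Finally I would record that $\psi$ is a bona fide continuous Bessel family: it is weakly measurable and countably-valued by construction, and since the analysis operator depends linearly on the generating family, $\|C_\psi\|\le\|C_\phi\|+\|C_\eta\|\le\sqrt B+\delta$, so $\{\psi_t\}$ is Bessel with bound at most $(\sqrt B+\delta)^2$. Combining this with the norm estimate above yields $\|S_{\sqrt\tau\phi,X}-S_{\sqrt\tau\psi,X}\|\le 2\sqrt B\,\delta+\delta^2$ for every measurable $\tau:X\to[0,1]$, and choosing $\delta$ small enough that this is below $\ve$ completes the argument. The main obstacle, as indicated, is not the operator estimate but the realization that plain uniform approximation must be replaced by a $\sigma$-finite decomposition with summably decaying precisions, in order to control the Bessel bound of the difference on a possibly infinite measure space.
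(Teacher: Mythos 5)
Your proof is correct, and it reaches the same conclusion as the paper's by the same underlying strategy --- decompose the ($\sigma$-finite) support into countably many pieces of finite measure and approximate $\phi$ by a countably-valued $\psi$ with a precision on each piece that decays fast enough to be summable against the measures of the pieces --- but the estimation step is genuinely different. The paper stratifies the support doubly, by the size of $\|\phi_t\|$ (dyadic shells $2^{n-1}\le\|\phi_t\|<2^n$) and by measure ($\mu(X_{n,m})\le 1$), because its estimate is pointwise: it bounds $\bigl||\langle f,\psi_t\rangle|^2-|\langle f,\phi_t\rangle|^2\bigr|\le\|\psi_t-\phi_t\|(\|\psi_t\|+\|\phi_t\|)$ and then integrates, so the chosen precision $\ve/(4^n2^m)$ must beat the factor $\|\psi_t\|+\|\phi_t\|\approx 2^{n+1}$ on each shell. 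Your route replaces this with an operator-level argument: the three-term identity for $\phi_t\otimes\phi_t-\psi_t\otimes\psi_t$ plus Cauchy--Schwarz gives $\|S_{\sqrt{\tau}\phi,X}-S_{\sqrt{\tau}\psi,X}\|\le 2\sqrt{BB_\eta}+B_\eta$ uniformly in $\tau$, so you only need the difference family to have a small Bessel bound, which requires a single-indexed partition into finite-measure sets with $\delta_k^2\mu(A_k)$ summable and no stratification by $\|\phi_t\|$ at all. This is a mild but real simplification (it also sidesteps the paper's aside about atoms, since you never need the pieces to have measure $\le 1$), at the cost of importing the standard analysis-operator/Bessel-bound formalism; the final quantitative bound $2\sqrt{B}\delta+\delta^2$ is cleaner than the paper's $6\ve$ bookkeeping. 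All steps check out, including the measurability and Bessel property of $\psi$ and the gluing of the per-piece approximations from Remark \ref{rcf}.
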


\begin{proof}
By Proposition \ref{p1} we can assume that $(X,\mu)$ is $\sigma$-finite. Since a measurable mapping is constant a.e. on atoms, and there are at most countably many atoms, we can assume that $\mu$ is a non-atomic measure.
Define the sets $X_0=\{t\in X: ||\phi_t||<1\}$ and
\[
X_n= \{t\in X: 2^{n-1} \le ||\phi_t||< 2^n \}, \qquad n\ge 1.
\]
Then, for any $\ve>0$, we can find a partition $\{X_{n,m}\}_{m\in \N}$ of each $X_n$ such that $\mu(X_{n,m}) \le 1$ for all $m\in\N$. Then, we can find a countably-valued measurable function $\{\psi_t\}_{t\in X}$ such that 
\[
||\psi_t - \phi_t || \le \frac{\ve}{4^n 2^m} \qquad\text{for }t\in X_{n,m}.
\]
Take any $f\in\mathcal H$ with $||f||=1$.
Then, for any $t\in X_{n,m}$,
\[
\begin{aligned}
||\langle f, \psi_t \rangle|^2 - |\langle f, \phi_t \rangle|^2|
& = |\langle f, \psi_t-\phi_t \rangle||\langle f, \psi_t +\phi_t \rangle| \le ||\psi_t - \phi_t||(||\psi_t||+||\phi_t||)
\\
&\le \frac{\ve}{4^n 2^m}(2^n+\ve + 2^n) \le \frac{3\ve}{2^n2^m} .
\end{aligned}
\]
Integrating over $X_{n,m}$ and summing over $n\in\N_0$ and $m\in\N$ yields
\[
\int_X ||\langle f, \psi_t \rangle|^2-|\langle f, \phi_t \rangle|^2 | d\mu(t)  \le  \sum_{n=0}^\infty\sum_{m=1}^\infty \frac{3\ve}{2^n2^m} \mu(X_{n,m}) \le 6\ve.
\]
Using the fact that $S_{\sqrt{\tau}\phi,X}$ is self-adjoint, we have
\[
\begin{aligned}
||S_{\sqrt{\tau}\phi,X} - S_{\sqrt{\tau}\psi,X}||
& = \sup_{||f||=1} |\langle (S_{\sqrt{\tau}\phi,X} - S_{\sqrt{\tau}\psi,X})f,f \rangle |
\\
&= \sup_{||f||=1} \bigg| \int_X \tau(t) (|\langle f, \psi_t \rangle|^2-|\langle f, \phi_t \rangle|^2) d\mu(t)  \bigg| \le 6\ve.
\end{aligned}
\]
Since $\ve>0$ is arbitrary, this completes the proof.
\end{proof}

\begin{remark}\label{r2.3}
Suppose $\{\psi_t\}_{t\in X}$ is a continuous frame which takes only countably many values as in Lemma \ref{approx}. Then for practical purposes, such a frame can be treated as a discrete frame. Indeed, there exists a sequence $\{\tilde \psi_n\}_{n\in \N}$ in $\mathcal H$ and a partition $\{X_n\}_{n\in\N}$ of $X$ such that
\begin{equation}\label{rem2}
\psi_t= \tilde \psi_n \quad\text{for all }t\in X_n, \ n\in \N.
\end{equation}
Since $\{\psi_t\}_{t\in X}$ is Bessel, we have $\mu(X_n)<\infty$ for all $n$ such that $\tilde \psi_n \ne 0$. Define vectors
\[
\tilde \psi_n =\sqrt{\mu(X_n)}\psi_n \qquad  n\in\N.
\]
Then, for all $f\in \mathcal H$,
\begin{equation}\label{rem3}
\int_X |\langle f, \psi_t \rangle|^2 d\mu (t) = \sum_{n\in \N} \int_{X_n} |\langle f, \psi_t \rangle|^2 d\mu (t) = \sum_{n\in \N} |\langle f, \tilde \psi_n \rangle|^2.
\end{equation}
Hence, $\{ \tilde \psi_n \}_{n\in\N}$ is a discrete frame and its frame operator coincides with that of a continuous frame $\{\psi_t\}_{t\in X}$. 
\end{remark}

In particular, if the measure space $X$ is $\sigma$-finite and atomic, then any continuous frame on $X$ takes only countably many values. That is, $X$ has a partition into atoms $\{X_n\}_{n\in\N}$. Then, the procedure in Remark \ref{r2.3} boils down to rescaling of atoms, which identifies atomic measure space $X$ with the counting measure on $\N$. Since every measure space decomposes into atomic and non-atomic components, we would like to investigate in detail continuous frames on non-atomic measure spaces $X$. As we will see below, such frames can be reduced to the case of Lebesgue measure on a subinterval of $\R$.

Our first reduction result shows that without loss of generality we can assume that the measure algebra associated with $(X,\mu)$ is separable. Let $\mathcal M$ denote the $\sigma$-algebra of $(X,\mu)$. Recall \cite[Sec. 40]{Ha} that a {\it measure algebra} associated with measure space $(X,\mu)$ consists of equivalence classes of measurable sets under the relation 
\[
E , F \in \mathcal M \qquad E \sim F \iff \mu(E \Delta F)=0,
\]
where $\Delta$ is a symmetric difference. Then, the set of measurable sets of finite measure becomes a metric space with the distance
\[
\rho(E,F) = \mu(E \Delta F) \qquad E,F \in\mathcal M.
\]
A measure algebra associated with $(X,\mathcal M, \mu)$ is {\it separable} if the corresponding metric space is separable. Then, we have the following fact.

\begin{proposition}\label{p2} Suppose that $\{\phi_t\}_{t\in X}$ is a continuous Bessel family defined on a $\sigma$-finite measure space $(X,\mathcal M,\mu)$. 
Let $\mathcal M' \subset \mathcal M$ be a $\sigma$-algebra generated by the sets
\[
\{ t\in X: \phi_t \in U \}, \qquad\text{where $U \subset \mathcal H$ is open}.
\]
Then, a measure algebra associated with $(X,\mathcal M', \mu)$ is  separable. 
\end{proposition}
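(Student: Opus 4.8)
The plan is to recognize $\mathcal{M}'$ as a pullback $\sigma$-algebra and to exploit the separability of $\mathcal{H}$ to see that it is countably generated, after which the classical fact that a $\sigma$-finite measure space with a countably generated $\sigma$-algebra has a separable measure algebra takes over. Writing $\Phi\colon X \to \mathcal{H}$, $\Phi(t) = \phi_t$, which is strongly measurable by Remark \ref{rcf}, the generating sets $\{t : \phi_t \in U\}$ are exactly the preimages $\Phi^{-1}(U)$. Since $\mathcal{H}$ is separable its topology has a countable basis $\{U_j\}_{j\in\N}$, and every open $U$ is a countable union of basic sets; as $\Phi^{-1}$ commutes with unions, $\mathcal{M}'$ is generated by the countable family $\{\Phi^{-1}(U_j)\}_{j\in\N}$, so $\mathcal{M}'$ is countably generated.

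First I would reduce to the support $S = \{t : \phi_t \neq 0\} = \Phi^{-1}(\mathcal{H}\setminus\{0\})$, which lies in $\mathcal{M}'$ because $\mathcal{H}\setminus\{0\}$ is open. On $S^c$ the map $\Phi$ is identically zero, so the trace of $\mathcal{M}'$ on $S^c$ is the trivial $\sigma$-algebra $\{\emptyset, S^c\}$; hence if $\mu(S^c)=\infty$ then every finite-measure member of $\mathcal{M}'$ is contained in $S$, while if $\mu(S^c)<\infty$ the single set $S^c$ may simply be adjoined to the generators below. In either case it suffices to establish separability of the measure algebra of $\mathcal{M}'$ over $S$. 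The advantage of working on $S$ is that, by the proof of Proposition \ref{p1}, the sets $W_{i,n} = \{t : |\langle e_i, \phi_t\rangle|^2 > 1/n\} = \Phi^{-1}(V_{i,n})$, with $V_{i,n} = \{h\in\mathcal{H} : |\langle e_i, h\rangle|^2 > 1/n\}$ open, have finite measure, lie in $\mathcal{M}'$, and cover $S$. Enumerating them as $\{W_k\}_{k\in\N}$ yields a countable cover of $S$ by finite-measure members of $\mathcal{M}'$, which is precisely what the $\sigma$-finite truncation requires.

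The main step is the density argument. Let $\mathcal{A}$ be the (countable) Boolean algebra generated by $\{\Phi^{-1}(U_j)\}_{j} \cup \{W_k\}_k$, and let $\mathcal{C} = \{A \in \mathcal{A} : \mu(A) < \infty\}$, a countable collection of finite-measure sets. I claim $\mathcal{C}$ is $\rho$-dense among the finite-measure sets of $\mathcal{M}'$ contained in $S$. Given such an $E$ and $\ve>0$, put $Z_N = \bigcup_{k=1}^{N} W_k \in \mathcal{A}$; since $Z_N \uparrow S$ we may choose $N$ with $\mu(E \setminus Z_N) < \ve/2$, and $\mu(Z_N) < \infty$. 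On the finite measure space $(Z_N, \mathcal{M}'|_{Z_N}, \mu)$ the trace algebra $\{A \cap Z_N : A \in \mathcal{A}\}$ generates $\mathcal{M}'|_{Z_N}$, so the classical approximation theorem for finite measures (see \cite{Ha}) produces $A \in \mathcal{A}$ with $\mu((E\cap Z_N)\,\Delta\,(A \cap Z_N)) < \ve/2$. Then $A \cap Z_N \in \mathcal{C}$, and the triangle inequality for $\rho$ gives $\mu(E\,\Delta\,(A\cap Z_N)) < \ve$. The one point demanding care — and the reason for the reduction to $S$ — is that the finite-measure sets used to truncate must themselves belong to $\mathcal{M}'$, whereas the ambient $\sigma$-finiteness of $(X,\mathcal{M},\mu)$ only supplies finite-measure sets in the larger algebra $\mathcal{M}$; the cover $\{W_k\}$ furnished by Proposition \ref{p1} is exactly what repairs this gap. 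Adjoining $S^c$ to the generators when $\mu(S^c)<\infty$ keeps $\mathcal{C}$ countable, and the separability of the measure algebra of $(X,\mathcal{M}',\mu)$ follows.
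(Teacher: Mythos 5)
Your proof is correct, and its first half coincides with the paper's: both arguments observe that the separability of $\mathcal H$ gives a countable basis of open sets (in the paper, balls with centers in a countable dense set and rational radii), so that $\mathcal M'$ is countably generated. Where you diverge is the second half. The paper stops there and invokes \cite[Theorem B in \S40]{Ha} to conclude that a countably generated measure algebra over a $\sigma$-finite space is separable. You instead prove that step by hand, and in doing so you address a point the paper passes over silently: the $\sigma$-finiteness hypothesis is stated for $(X,\mathcal M,\mu)$, and the finite-measure sets witnessing it need not belong to the smaller $\sigma$-algebra $\mathcal M'$, so the restricted measure space $(X,\mathcal M',\mu)$ is not obviously $\sigma$-finite in the sense required by the cited theorem. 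Your repair --- covering the support $S=\{t:\phi_t\ne 0\}$ by the finite-measure sets $W_{i,n}=\Phi^{-1}(V_{i,n})\in\mathcal M'$ coming from the proof of Proposition \ref{p1}, noting that the trace of $\mathcal M'$ on $S^c$ is trivial, and then running the standard algebra-approximation argument on each truncation $Z_N$ --- is exactly what is needed, and it is the same mechanism that makes Corollary \ref{p3} work (there the restriction to the $\sigma$-finite support is made explicit). The trade-off is length: the paper's citation is shorter, while your version is self-contained and makes the $\sigma$-finiteness of $\mathcal M'$ over the support an explicit, verified ingredient rather than an implicit one.
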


\begin{proof}
Let $\mathcal D$ be a countable dense subset of $\mathcal H$. Then, $\sigma$-algebra $\mathcal M'$ is generated by the sets of the form
\[
\{ t\in X: ||f-\phi_t||<q \},
\qquad\text{where }f\in\mathcal D, \ 0<q \in \Q.
\]
Since balls in $\mathcal H$, and hence open sets in $\mathcal H$, are Borel sets with respect to the weak topology on $\mathcal H$, the above sets belong to $\mathcal M$. Consequently, $\sigma$-algebra $\mathcal M'$ is countably generated. By \cite[Theorem B in \S40]{Ha}, the metric space of $\mathcal M'$-measurable sets is separable.
\end{proof}

Combining Propositions \ref{p1} and \ref{p2} we obtain the following result. Corollary \ref{p3} shows that a continuous frame over any measure space can be reduced to a continuous frame over a separable measure algebra.

\begin{corollary}\label{p3}
Let $\mathcal H$ be a separable Hilbert spaces and let $(X,\mathcal M, \mu)$ be a measure space. Suppose that $\{\phi_t\}_{t\in X}$ is a continuous Bessel family over $X$ in $\mathcal H$. Then there exists $\sigma$-finite subset $X' \subset X$ and a $\sigma$-algebra 
$
\mathcal M' \subset \{ E \cap X': E\in\mathcal M\}
$
such that:
\begin{enumerate}[(i)]
\item $\phi_t =0$ for all $t\in X \setminus X'$,
\item the restriction $\{\phi_t\}_{t\in X'}$ is a continuous Bessel family over $(X',\mathcal M',\mu)$, and
\item the measure algebra of $(X',\mathcal M',\mu)$ is separable.
\end{enumerate}
\end{corollary}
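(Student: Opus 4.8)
The plan is to combine Propositions~\ref{p1} and~\ref{p2} directly, taking for $X'$ the support of the family. First I would set $X'=\{t\in X:\phi_t\ne 0\}$. This set lies in $\mathcal M$: weak measurability (condition (i) of Definition~\ref{cf}) makes each $t\mapsto \langle e_i,\phi_t\rangle$ measurable, and the identity displayed in the proof of Proposition~\ref{p1} writes $X'$ as a countable union of sets in $\mathcal M$. Proposition~\ref{p1} then guarantees that $X'$ is $\sigma$-finite, and statement (i) holds by the very definition of $X'$.

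Next I would check that the restriction $\{\phi_t\}_{t\in X'}$ is a continuous Bessel family over the $\sigma$-finite space $(X',\mathcal M|_{X'},\mu)$, where $\mathcal M|_{X'}=\{E\cap X':E\in\mathcal M\}$. Since $\phi_t=0$ whenever $t\notin X'$, for every $f\in\mathcal H$ the integrals of $|\langle f,\phi_t\rangle|^2$ over $X'$ and over $X$ coincide, so the restricted family retains the same Bessel bound $B$.

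Finally I would invoke Proposition~\ref{p2} for the $\sigma$-finite space $(X',\mathcal M|_{X'},\mu)$, letting $\mathcal M'$ be the $\sigma$-algebra generated by the sets $\{t\in X':\phi_t\in U\}$ with $U\subset\mathcal H$ open. By construction the map $t\mapsto\phi_t$ is $\mathcal M'$-measurable, and composing with the continuous functional $x\mapsto\langle f,x\rangle$ shows that $t\mapsto\langle f,\phi_t\rangle$ is $\mathcal M'$-measurable for each $f\in\mathcal H$; together with the Bessel bound just retained this yields (ii). Proposition~\ref{p2} asserts precisely that the measure algebra of $(X',\mathcal M',\mu)$ is separable, giving (iii) and completing the argument.

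The only point needing a moment's care --- rather than a genuine obstacle --- is that the Bessel inequality must hold over the smaller $\sigma$-algebra $\mathcal M'$. This is automatic: the integrand $|\langle f,\phi_t\rangle|^2$ is $\mathcal M'$-measurable by the previous paragraph, and its integral is unchanged in passing from $\mathcal M|_{X'}$ to $\mathcal M'$, since $\mathcal M'\subset\mathcal M|_{X'}$ and $\mu$ restricts consistently.
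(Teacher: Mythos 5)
Your argument is correct and is exactly the combination of Propositions~\ref{p1} and~\ref{p2} that the paper itself invokes (the corollary is stated there without a written-out proof, precisely as ``combining Propositions~\ref{p1} and~\ref{p2}''): take $X'$ to be the support, get $\sigma$-finiteness from Proposition~\ref{p1}, and let $\mathcal M'$ be the $\sigma$-algebra generated by the sets $\{t:\phi_t\in U\}$, whose measure algebra is separable by Proposition~\ref{p2}. Your added checks --- that the Bessel bound survives restriction to $X'$ and that weak measurability holds with respect to the smaller $\sigma$-algebra $\mathcal M'$ --- are the right details to verify and are handled correctly.
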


We will use the classical isomorphism theorem for measure algebras due to Carath\'eodory, see \cite[Theorem 9.3.4]{Bo} or \cite[Theorem C in \S41]{Ha}.

\begin{theorem}[Carath\'eodory]\label{p4} 
Every separable, non-atomic,  measure algebra of a probability space is isomorphic to the measure algebra of the Lebesgue unit interval.
\end{theorem}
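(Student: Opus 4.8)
The plan is to construct the isomorphism explicitly by building a refining sequence of dyadic partitions that generates the measure algebra, and then matching their atoms with dyadic subintervals of $[0,1]$. Throughout, let $\mathcal A$ denote the given measure algebra, equipped with the metric $\rho(E,F)=\mu(E\Delta F)$ from the excerpt; I will use the standard facts that $\mathcal A$ is a \emph{complete} metric space on which the Boolean operations are $\rho$-continuous, and that any measure-preserving Boolean homomorphism is automatically a $\rho$-isometry. By separability, fix a sequence $\{E_n\}_{n\in\N}$ that is dense in $\mathcal A$; being dense, it generates $\mathcal A$. After normalizing, we may assume $\mu(X)=1$.

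The first main step is the inductive construction of finite measurable partitions $\xi_1\le\xi_2\le\cdots$ of $X$ in which each $\xi_k$ has exactly $2^k$ atoms, each of measure $2^{-k}$, each $\xi_{k+1}$ refines $\xi_k$ by splitting every atom into two halves, and the $\sigma$-subalgebra generated by $\bigcup_k\xi_k$ is all of $\mathcal A$. The equal-measure halving is precisely where non-atomicity enters: by Sierpi\'nski's intermediate value theorem a non-atomic measure assumes every value in $[0,\mu(A)]$ on measurable subsets of $A$, so any atom of measure $2^{-k}$ can be split into two subsets of measure $2^{-k-1}$. To force the partitions to generate everything, I interleave the halving with approximation: at stage $k$ I additionally refine so that each of $E_1,\dots,E_k$ is approximated within $\rho$-distance $2^{-k}$ by a union of atoms of $\xi_k$. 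Since $\{E_n\}$ is $\rho$-dense, the algebra $\mathcal F=\bigcup_k \mathcal F_k$ of finite unions of atoms (where $\mathcal F_k$ is generated by $\xi_k$) is then $\rho$-dense in $\mathcal A$.

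The second step defines the isomorphism on this dense dyadic subalgebra. The atoms of the $\xi_k$ form a binary tree under refinement, so I can label the $2^k$ atoms of $\xi_k$ by the $2^k$ dyadic subintervals $[j2^{-k},(j+1)2^{-k})$ of $[0,1]$ in a manner compatible with the refinement maps. This yields a measure-preserving Boolean homomorphism $\Phi$ from $\mathcal F$ onto the algebra of finite unions of dyadic intervals in $[0,1]$, and hence a $\rho$-isometry. Both algebras are $\rho$-dense in their respective complete measure algebras (the dyadic intervals generate the Lebesgue algebra), so $\Phi$ extends uniquely to an isometric bijection of the completions, that is, to a measure-preserving Boolean isomorphism of $\mathcal A$ onto the measure algebra of the Lebesgue unit interval.

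The part I expect to require the most care is the inductive construction in the first step: keeping the partitions exactly dyadic while simultaneously forcing them to approximate the generating family $\{E_n\}$, so that the limiting algebra is genuinely all of $\mathcal A$ rather than a proper subalgebra. This is where both hypotheses are indispensable — non-atomicity supplies the equal splittings via Sierpi\'nski's property, and separability supplies a countable family whose approximation makes the dyadic subalgebra dense. Once density and measure preservation are in hand, extending $\Phi$ to an isomorphism is a routine completion argument that uses only the completeness of $\mathcal A$ under $\rho$.
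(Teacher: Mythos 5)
The paper does not prove this statement at all: it is quoted as a classical result of Carath\'eodory with references to Bogachev \cite[Theorem 9.3.4]{Bo} and Halmos \cite[Theorem C in \S 41]{Ha}, so there is no ``paper proof'' to compare against. Your argument is essentially the standard textbook proof (a refining dyadic filtration matched to the dyadic intervals, followed by a completion argument), and its overall architecture --- Sierpi\'nski's intermediate-value property for the equal halvings, density of $\{E_n\}$ to force generation, measure-preserving Boolean homomorphisms being $\rho$-isometries, and the extension to the completions --- is sound.

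There is, however, one step that does not work as literally written. You require $\xi_{k+1}$ to refine $\xi_k$ by a \emph{single} halving of each atom (so $\xi_k$ has exactly $2^k$ atoms of measure $2^{-k}$), and simultaneously require that at stage $k$ each of $E_1,\dots,E_k$ be approximated within $2^{-k}$ by a union of atoms of $\xi_k$. One halving per stage cannot in general buy this much approximation: if $A$ is an atom with $\mu(E\cap A)=\mu(A)/3$, the best you can do after splitting $A$ into two equal halves still leaves an error of order $\mu(A)$ on that atom, and the greedy equal-measure splitting (put all of $E\cap A$ into one child when $\mu(E\cap A)\le\mu(A)/2$, or make one child a subset of $E$ otherwise) cleans up only \emph{one} set $E$ at a time, producing one ``bad'' child per bad atom; with $k$ sets to approximate and only one halving available per stage, the claimed rate $2^{-k}$ for all of $E_1,\dots,E_k$ is not justified. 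The gap is easily repaired because approximation quality is monotone under refinement: once $E_n$ is approximated within $\varepsilon$ by a union of atoms, every finer partition retains that approximation. So either allow several halvings between consecutive partitions, or (equivalently) enumerate pairs $(n,j)$ and devote a block of consecutive stages to driving the error of $E_n$ below $2^{-j}$ via the greedy splitting above (after $m$ dedicated halvings starting from level $k$ the error is at most $2^k\cdot 2^{-k-m}=2^{-m}$). With that modification the dyadic subalgebra is $\rho$-dense in $\mathcal A$ and the rest of your completion argument goes through unchanged.
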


As a consequence of Theorem \ref{p4} we have:

\begin{proposition}\label{p5} Suppose that $(X,\mu)$ is a non-atomic, $\sigma$-finite measure space such that its measure algebra is separable. Let $\phi: X \to \mathcal H$ be a weakly measurable function. Then there exists a weakly measurable function $\psi: [0,\mu(X)) \to \mathcal H$, which has the same distribution as $\phi$. That is, 
\begin{equation}\label{dst}
\mu (\phi^{-1}(U)) = \lambda( \psi^{-1}(U)) \qquad\text{ for any open $U \subset \mathcal H$},
\end{equation}
where $\lambda$ denotes the Lebesgue measure on $\mathcal \R$.
\end{proposition}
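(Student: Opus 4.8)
The plan is to reduce to a probability space and then promote the measure-algebra isomorphism furnished by Theorem \ref{p4} to an honest point map on the interval, using a simple-function approximation of $\phi$.

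First I would reduce to finite measure. Since $(X,\mu)$ is $\sigma$-finite and non-atomic, write $X=\bigsqcup_{n} X_n$ as a countable disjoint union of measurable sets with $0<\mu(X_n)<\infty$, and split $[0,\mu(X))$ into the consecutive intervals $I_n=[a_n,a_{n+1})$ with $a_n=\sum_{j<n}\mu(X_j)$, so that $\lambda(I_n)=\mu(X_n)$. Each restricted space $(X_n,\mu|_{X_n})$ is again non-atomic with separable measure algebra, and $\phi|_{X_n}$ is weakly measurable. If for each $n$ I can produce $\psi_n\colon I_n\to\mathcal H$ having the same distribution as $\phi|_{X_n}$, then gluing the $\psi_n$ yields $\psi\colon[0,\mu(X))\to\mathcal H$ whose distribution is $\sum_n (\phi|_{X_n})_\ast(\mu|_{X_n})=\phi_\ast\mu$, which is exactly \eqref{dst}. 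After rescaling each $I_n$ affinely onto $[0,1)$, it therefore suffices to treat the case where $\mu$ is a non-atomic probability measure with separable measure algebra.

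In that case, by Theorem \ref{p4} there is a measure-preserving, $\sigma$-complete Boolean isomorphism $\Phi$ from the measure algebra of $(X,\mu)$ onto that of $([0,1),\lambda)$. To turn $\Phi$ into a function I would approximate $\phi$ by simple functions. By Remark \ref{rcf}, $\phi$ is an a.e.\ uniform limit of countably-valued measurable functions $\phi_k$, and, by passing to common refinements, I may arrange the associated measurable partitions $\{A^{(k)}_j\}_j$ of $X$ to be nested in $k$ and to satisfy $\|\phi_{k+1}-\phi_k\|\le 2^{-k}$ a.e. Applying $\Phi$ to each $A^{(k)}_j$ and choosing the representatives inductively so that the resulting partitions $\{B^{(k)}_j\}_j$ of $[0,1)$ are likewise nested, I define $\psi_k$ to equal on $B^{(k)}_j$ the constant value that $\phi_k$ takes on $A^{(k)}_j$; since $\lambda(B^{(k)}_j)=\mu(A^{(k)}_j)$, each $\psi_k$ has exactly the same distribution as $\phi_k$. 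Because the partitions are nested and $\|\phi_{k+1}-\phi_k\|\le 2^{-k}$ forces the value of $\phi_{k+1}$ on $A^{(k+1)}_{j'}\subseteq A^{(k)}_j$ to lie within $2^{-k}$ of the value of $\phi_k$ on $A^{(k)}_j$, the values of $\psi_{k+1}$ and $\psi_k$ on $B^{(k+1)}_{j'}\subseteq B^{(k)}_j$ differ by at most $2^{-k}$, so $\psi_k$ is uniformly Cauchy off a null set and converges a.e.\ to a measurable $\psi\colon[0,1)\to\mathcal H$.

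Finally I would pass to the limit in distribution. Uniform a.e.\ convergence gives $g\circ\psi_k\to g\circ\psi$ and $g\circ\phi_k\to g\circ\phi$ boundedly for every bounded continuous $g\colon\mathcal H\to\R$, so by dominated convergence $\int g\,d(\psi_k)_\ast\lambda\to\int g\,d\psi_\ast\lambda$ and $\int g\,d(\phi_k)_\ast\mu\to\int g\,d\phi_\ast\mu$. Since $(\psi_k)_\ast\lambda=(\phi_k)_\ast\mu$ for every $k$, the two limits coincide, whence $\psi_\ast\lambda=\phi_\ast\mu$ as finite Borel measures on $\mathcal H$ (they integrate every bounded continuous function identically, and such measures are determined by these integrals); in particular they agree on open sets, which is \eqref{dst}. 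The main obstacle is the step that promotes $\Phi$ to a genuine function: a measure-algebra isomorphism matches only equivalence classes modulo null sets and need not be induced by any point transformation of $X$, so the real work is the compatible (nested) choice of the representatives $B^{(k)}_j$ that makes the simple functions $\psi_k$ converge pointwise a.e.; once that is arranged, the distributional bookkeeping is routine.
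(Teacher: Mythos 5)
Your proposal is correct and follows essentially the same route as the paper: reduce to a probability space via the $\sigma$-finite decomposition, invoke Carath\'eodory's isomorphism theorem, transfer a sequence of countably-valued approximants of $\phi$ across the measure-algebra isomorphism, and pass to the limit in distribution. The only cosmetic difference is how the a.e.\ uniform Cauchy property of the transferred sequence is secured -- you arrange it by hand with nested partitions and the bound $\|\phi_{k+1}-\phi_k\|\le 2^{-k}$, whereas the paper observes that $\psi_n-\psi_m$ automatically has the same distribution as $\phi_n-\phi_m$, so the Cauchy estimate transfers for free.
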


\begin{proof}
If $\mu(X)=\infty$, then there exists a sequence of disjoint measurable subsets $\{X_m\}_{m\in \N}$ of $X$ such that
\[
X = \bigcup_{m=1}^\infty X_m \qquad\text{and}\qquad
\mu(X_m)=1 \quad\text{for all }m\in\N.
\]
By Theorem \ref{p4}, the measure algebra of each $(X_m,\mu|_{X_m})$ is isomorphic with $([m-1,m],\lambda)$, where $\lambda$ denotes the Lebesgue measure. These isomorphisms induce a global isomorphism of a measure algebra of $(X,\mu)$ with that $([0,\infty),\lambda)$, see \cite[\S 41, Ex. 6]{Ha}. If $\mu(X)<\infty$, the measure algebra $(X,\mu)$ is isomorphic with that of $([0,\mu(X)),\lambda)$ by a simple rescaling of Theorem \ref{p4}. 

Now, let $\phi: X\to \mathcal H$ be weakly measurable. If $\phi$ takes at most countably many values, then the isomorphisms of measure algebras yields $\psi: [0,\mu(X)) \to \mathcal H$, which has the same distribution as $\phi$. In general, by Remark \ref{p1} $\phi$ is an a.e. uniform limit of a sequence of measurable functions $\phi_n:  X \to \mathcal H$, $n\in \N$, which take at most countably many values. The isomorphism of measure algebras yields $\psi_n:  [0,\mu(X)) \to \mathcal H$, $n\in\N$, such that:
\begin{itemize}
\item[(i)]
$\psi_n$ has the same distribution as $\phi_n$ for every $n\in \N$,
\item[(ii)]
$\psi_n-\psi_m$ has the same distribution as $\phi_n-\phi_m$ for every $n,m \in \N$.
\end{itemize}
By (ii), the sequence $\{\psi_n\}_{n\in\N}$ converges a.e. uniformly to some limiting function $\psi$. 
In particular, functions $\psi_n$ converge in measure to $\psi$ as $n\to \infty$ if $\mu(X)<\infty$. If $\mu(X)=\infty$, then restrictions $\psi_n|_{X_m}$ converge in measure to $\psi|_{X_m}$ for each $m\in \N$. In either case, (i) implies that $\phi$ and $\psi$ have the same distribution.
\end{proof}

Combining Corollary \ref{p3} and Proposition 2.6 yields the following result. Theorem \ref{p6} shows that from measure theoretic viewpoint a continuous frame on non-atomic measure space can be reduced to the setting of Lebesgue measure on an interval.

\begin{theorem}\label{p6}
Let $\mathcal H$ be a separable Hilbert spaces and let $(X,\mathcal M, \mu)$ be a non-atomic measure space. Suppose that $\{\phi_t\}_{t\in X}$ is a continuous Bessel family over $X$ in $\mathcal H$. Then there exists a continuous Bessel family $\{\psi_t\}_{t\in I}$ over interval $I=[0,\mu(X))$, which has the same distribution as $\{\phi_t\}_{t\in X}$ on its support, i.e., \eqref{dst} holds for any open $U \subset \mathcal H \setminus \{0\}$.
\end{theorem}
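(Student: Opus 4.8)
The plan is to funnel the measure-theoretic reductions already established into Proposition~\ref{p5} and then transport the Bessel estimate across the resulting distributional identity. First I would apply Corollary~\ref{p3} to replace $(X,\mathcal M,\mu)$ by a $\sigma$-finite subset $X'\subset X$ equipped with a sub-$\sigma$-algebra $\mathcal M'$ whose measure algebra is separable, retaining that $\phi_t=0$ for $t\in X\setminus X'$ and that $\{\phi_t\}_{t\in X'}$ is Bessel over $(X',\mathcal M',\mu)$ with the same bound $B$. Since $\phi$ vanishes off $X'$ and the desired conclusion only concerns open sets $U\subset\mathcal H\setminus\{0\}$, it suffices to produce $\psi$ on $[0,\mu(X'))$ and then extend it by zero to $[0,\mu(X))$; the added zeros do not affect \eqref{dst} for such $U$, and $\mu(X')\le\mu(X)$ guarantees that the extension lands in the prescribed interval $I=[0,\mu(X))$.

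The main obstacle is that Proposition~\ref{p5} requires a non-atomic measure space, whereas the generated $\sigma$-algebra $\mathcal M'$ need not be non-atomic even though the ambient $(X,\mathcal M,\mu)$ is: wherever $\phi$ is a.e.\ constant on a set of positive measure, that set is an atom of $\mathcal M'$ (for instance, if $\phi$ is a.e.\ constant, then $X'$ itself is such an atom). I would bridge this gap by enlarging $\mathcal M'$. Each atom $A$ of $\mathcal M'$ is $\mathcal M$-measurable, carries positive measure, and supports an a.e.\ constant value of $\phi$; by $\sigma$-finiteness there are at most countably many such atoms. Using the non-atomicity of $(X,\mathcal M,\mu)$, I would choose inside each atom a nested sequence of dyadic $\mathcal M$-partitions and let $\tilde{\mathcal M}$ be the $\sigma$-algebra generated by $\mathcal M'$ together with all these (countably many) partition pieces. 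Then $\tilde{\mathcal M}$ remains countably generated---hence has separable measure algebra by \cite[Theorem B in \S40]{Ha}---while a short check on the restriction of $\tilde{\mathcal M}$ to each atom (where it becomes the non-atomic dyadic algebra) and to the complement of the atoms (where it coincides with the already non-atomic part of $\mathcal M'$) shows that $\tilde{\mathcal M}$ is non-atomic. Crucially $\mathcal M'\subset\tilde{\mathcal M}$, so $\phi|_{X'}$ stays weakly $\tilde{\mathcal M}$-measurable.

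With $(X',\tilde{\mathcal M},\mu)$ now non-atomic, $\sigma$-finite, and separable, Proposition~\ref{p5} applies and yields a weakly measurable $\psi\colon[0,\mu(X'))\to\mathcal H$ having the same distribution as $\phi|_{X'}$, i.e.\ $\mu(\phi^{-1}(U))=\lambda(\psi^{-1}(U))$ for every open $U\subset\mathcal H$; extending $\psi$ by zero as above then gives \eqref{dst} on $\mathcal H\setminus\{0\}$. It remains to confirm that $\{\psi_t\}$ is a continuous Bessel family with bound $B$. Since the open subsets of $\mathcal H$ form a $\pi$-system generating its Borel $\sigma$-algebra, the agreement of distributions on open sets upgrades, by $\sigma$-finiteness of the supports, to the equality of the pushforward measures $\phi_*\mu$ and $\psi_*\lambda$ on all Borel subsets of $\mathcal H$. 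Applying this to the nonnegative Borel map $v\mapsto|\langle f,v\rangle|^2$ gives, for every $f\in\mathcal H$,
\[
\int_{[0,\mu(X))}|\langle f,\psi_t\rangle|^2\,d\lambda(t)=\int_{X}|\langle f,\phi_t\rangle|^2\,d\mu(t)\le B\|f\|^2,
\]
where the contributions of the two zero sets drop out because the integrand vanishes at $0$. This is precisely the Bessel inequality in \eqref{cf1} with $A=0$, so $\{\psi_t\}_{t\in I}$ is the desired continuous Bessel family and the proof is complete.
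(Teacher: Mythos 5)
Your proposal follows the same route as the paper: restrict to the support, invoke Corollary~\ref{p3} to get a separable measure algebra, feed the result into Proposition~\ref{p5}, and extend by zero to $[0,\mu(X))$. The one place you genuinely add something is the non-atomicity issue: the paper simply says ``since the underlying measure space is non-atomic, Proposition~\ref{p5} yields\ldots'', but Proposition~\ref{p5} is applied to $(X',\mathcal M',\mu)$ with $\mathcal M'$ the $\sigma$-algebra generated by $\phi$, and this sub-$\sigma$-algebra can perfectly well have atoms even when $(X,\mathcal M,\mu)$ does not (your example of $\phi$ a.e.\ constant on a set of positive measure is exactly right). Your repair---adjoining to the countable generating family a nested sequence of dyadic $\mathcal M$-partitions of each of the at most countably many $\mathcal M'$-atoms, which preserves countable generation (hence separability) while killing all atoms---is correct and is the standard way to close this gap. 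Your verification of the Bessel bound via equality of the pushforward measures on the Borel sets of $\mathcal H\setminus\{0\}$ is also sound, though you should note explicitly that the $\pi$-system argument needs the pushforwards to be $\sigma$-finite away from $0$; this follows from Proposition~\ref{p1}, since $\mathcal H\setminus\{0\}$ is covered by the countably many open sets $\{v:|\langle e_i,v\rangle|^2>1/n\}$ whose preimages have finite measure. In short: same architecture as the paper, with a legitimate and correctly executed patch of a step the paper elides.
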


\begin{proof}
If we restrict $\{\phi_t\}_{t\in X'}$ to its support $X'=\{t\in X: \phi_t \ne 0\}$, Corollary \ref{p3} shows that we have a continuous Bessel family over $\sigma$-finite and separable measure algebra on $X'$. Since the underlying measure space is non-atomic, Proposition \ref{p5} yields a continuous Bessel family $\{\phi_t\}_{t\in I'}$, where $I'=[0,\mu(X'))$, which has the same distribution as $\{\psi_t\}_{t\in X'}$. If $\mu(X')<\mu(X)$, then setting $\psi_t=0$ for $t\in [\mu(X'),\mu(X))$ yields the required 
continuous Bessel family over $I=[0,\mu(X))$. It has the same distribution as $\{\psi_t\}_{t\in X}$ neglecting the set on which it vanishes.
\end{proof}

\section{Lyapunov's theorem}\label{S3}

Akemann and Weaver \cite{AW} have shown an interesting generalization of Weaver's $KS_r$ Conjecture \cite{We} in the form of approximate Lyapunov's theorem. This was made possible thanks to the breakthrough solution of the Kadison-Singer problem \cite{CT, KS} by Marcus, Spielman, and Srivastava \cite{MSS}. In this section we show a similar type of result for continuous frames. 

For $\phi \in \mathcal H$, let $\phi\otimes\phi$ denote a rank one operator given by
\[
(\phi\otimes\phi)(f) = \langle f, \phi \rangle \phi 
\qquad\text{for }f\in\mathcal H.
\]

The following lemma is an infinite dimensional formulation of a result due to Akemann and Weaver \cite[Lemma 2.3]{AW}. The proof of this fact heavily depends on a qualitative version of Weaver's $KS_r$ Conjecture shown by Marcus, Spielman, and Srivastava in \cite[Corollary 1.5]{MSS}.

\begin{lemma}[Akemann and Weaver]
\label{awl}
There exists a universal constant $C>0$ such that the following holds.
Suppose $\{\phi_i\}_{i\in I}$ is a Bessel family with bound $1$ in a separable Hilbert space $\mathcal{H}$, which consists of vectors of norms $\|\phi_i\|^2\leq \ve$, where $\ve>0$. Let $S$ be its frame operator. Then for any $0 \le \tau \le 1$, there exists a subset $I_0 \subset I$ such that
\[
\bigg\| \sum_{i\in I_0} \phi_i \otimes \phi_i - \tau S \bigg\| \le C \ve^{1/4}.
\]
\end{lemma}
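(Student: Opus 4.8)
The plan is to reduce the statement, by a spectral truncation of $S$, to the one situation where the Marcus--Spielman--Srivastava result applies, namely a Parseval frame (frame operator equal to the identity) with small vectors, and then to round the constant weight $\tau$ to a genuine subset $I_0$. Fix a threshold $\theta\in(0,1)$ to be optimized, let $P$ be the spectral projection of $S$ onto $[\theta,\|S\|]$, and set $Q=\mathbf I-P$ and $R:=\sum_{i\in I_0}\phi_i\otimes\phi_i$. On $Q\mathcal H$ the operator $S$ is small, $\|QSQ\|<\theta$, so \emph{any} selection gives $\|QRQ\|\le\|QSQ\|<\theta$ and $\|\tau QSQ\|<\theta$; the block $Q(R-\tau S)Q$ thus costs at most $2\theta$ for free. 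Since $PSQ=QSP=0$, the off-diagonal blocks reduce to $\|PRQ\|\le\|PRP\|^{1/2}\|QRQ\|^{1/2}\le\sqrt{2\theta}$ (using $R\ge 0$, and $\|PRP\|\le 2$ once the $P$-block is estimated below), so the entire error outside the well-conditioned block $P\mathcal H$ is $O(\sqrt\theta)$ independently of $I_0$.

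On $P\mathcal H$ the operator $S|_P$ has spectrum in $[\theta,\|S\|]$ and so is boundedly invertible; I would normalize by $u_i=(S|_P)^{-1/2}P\phi_i$. Then $\{u_i\}$ is a Parseval frame for $P\mathcal H$, $\sum_i u_i\otimes u_i=P$, with $\|u_i\|^2=\langle (S|_P)^{-1}P\phi_i,P\phi_i\rangle\le\ve/\theta$. This is exactly the small-norm regime required by \cite[Corollary 1.5]{MSS}, and the task on $P\mathcal H$ becomes to find $I_0$ with $\|\sum_{i\in I_0}u_i\otimes u_i-\tau P\|$ small. Conjugating back by $(S|_P)^{1/2}$ (whose norm is $\le\|S\|^{1/2}\le1$) yields $\|P(R-\tau S)P\|\le\|\sum_{i\in I_0}u_i\otimes u_i-\tau P\|$, so the $P$-block is controlled by the normalized rounding error alone.

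For that rounding I would run the tensoring/randomization argument behind \cite[Corollary 1.5]{MSS} in a \emph{weighted} form adapted to $\tau$ (the cases $\tau\in\{0,1\}$ being trivial): for each $i$ let $v_i$ be the random vector in $P\mathcal H\oplus P\mathcal H$ equal to $\tau^{-1/2}(u_i,0)$ with probability $\tau$ and to $(1-\tau)^{-1/2}(0,u_i)$ with probability $1-\tau$. Then $\mathbb{E}\,v_i\otimes v_i=(u_i\otimes u_i)\oplus(u_i\otimes u_i)$, the total expectation is the identity $P\oplus P$, and $\mathbb{E}\|v_i\|^2=2\|u_i\|^2\le 2\ve/\theta$. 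The MSS theorem produces a realization, equivalently a subset $I_0$, for which both diagonal blocks $\tau^{-1}\sum_{i\in I_0}u_i\otimes u_i$ and $(1-\tau)^{-1}\sum_{i\notin I_0}u_i\otimes u_i$ have norm at most $(1+\sqrt{2\ve/\theta})^2$; as these two selections are complementary in $P$, forcing from below gives $\|\sum_{i\in I_0}u_i\otimes u_i-\tau P\|\le C\sqrt{\ve/\theta}$ uniformly in $\tau$. Combining the three blocks yields $\|R-\tau S\|\le C(\sqrt{\ve/\theta}+\sqrt\theta)$, and the choice $\theta=\sqrt\ve$ balances the two terms to give the claimed $C\ve^{1/4}$.

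The main obstacle I anticipate is twofold. Conceptually, the MSS/Weaver machinery only speaks about Parseval frames, whereas $S$ is an arbitrary positive contraction with possibly arbitrarily small eigenvalues; the spectral truncation reconciles the two, and obtaining the sharp exponent $1/4$ (rather than a worse power) depends both on the \emph{weighted} randomization, which rounds to the exact target $\tau P$ with error $\sqrt{\ve/\theta}$ instead of the $1/r$-discretized error of a plain equal $r$-partition, and on the optimal threshold $\theta=\sqrt\ve$. Technically, the genuinely delicate point is the passage to a separable infinite-dimensional $\mathcal H$ and an infinite index set $I$, since \cite{MSS} is finite-dimensional and $S$ need not be compact, so $P\mathcal H$ may be infinite-dimensional and no finite sub-family approximates $S$ in norm. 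I would treat this by exhausting $\mathcal H$ by finite-dimensional projections and $I$ by finite sub-families, solving the finite problems, and extracting a single selection as a limit of the finite indicators $\chi_{I_0^{(n)}}$ in the compact product space $\{0,1\}^I$; the Bessel bound $1$ supplies a summable dominating function, so along a subnet $\sum_{i\in I_0^{(n)}}\phi_i\otimes\phi_i$ converges to $\sum_{i\in I_0}\phi_i\otimes\phi_i$ in the weak operator topology, and the estimate survives because the operator norm is weakly lower semicontinuous. Arranging the finite truncations compatibly with the spectral splitting is the step demanding the most care.
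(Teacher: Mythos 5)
Your argument is correct, and its skeleton --- spectral truncation of $S$ at a threshold $\theta$, reduction to a Parseval frame $u_i=(S|_P)^{-1/2}P\phi_i$ on the well-conditioned block, an application of Marcus--Spielman--Srivastava there, and the balance $\theta=\sqrt\ve$ producing the exponent $1/4$ --- is exactly the mechanism of \cite[Lemma 2.3]{AW} to which the paper defers; likewise your limiting argument (compactness of $\{0,1\}^I$, dominated convergence from the Bessel bound, weak lower semicontinuity of the norm) is the ``pinball principle'' / cluster-point argument the paper invokes via \cite[Theorem 3.1]{AW} and \cite[Theorem 6.9]{BCMS}. Where you genuinely depart from the cited route is the rounding step on $P\mathcal H$: Akemann and Weaver reach $\tau P$ through their Lemma 2.1 and Corollary 2.2, i.e.\ by iterating the two-set Weaver partition of \cite[Corollary 1.5]{MSS} along the binary expansion of $\tau$ and summing the accumulated errors, whereas you apply the probabilistic statement \cite[Theorem 1.4]{MSS} once, to the random direct-sum vectors with the non-uniform probabilities $(\tau,1-\tau)$, and extract the two-sided bound from the complementarity of $I_0$ and its complement in $P$. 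Your version is cleaner and gives the $O(\sqrt{\ve/\theta})$ error uniformly in $\tau$ in one stroke; the price is that you must check that \cite[Theorem 1.4]{MSS} tolerates non-identically distributed two-valued vectors with only $\EV\|v_i\|^2$ small (it does), and that $\tau\in\{0,1\}$ is handled separately (you do).

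Two small simplifications: $\|PRP\|\le 1$ follows directly from $\mathbf 0\le R\le S\le \mathbf I$, so you need not bootstrap it from the $P$-block estimate; and the ``compatibility of the finite truncations with the spectral splitting'' that worries you is a non-issue, because each finite sub-family $\{\phi_i\}_{i\in I_n}$ is a Bessel family with the same bound $1$ and the same $\ve$, so each finite problem is solved independently with the same universal constant and only the selections $I_0^{(n)}$, not the spectral data, are passed to the limit.
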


\begin{proof}
Lemma \ref{awl} has been shown in great detail in finite dimensional case in \cite[Lemma 2.3]{AW}. As mentioned in \cite[Section 3]{AW}, it extends to the infinite dimensional case. For the sake of completeness, we merely indicate the strategy for proving it.

First, note that we can relax the Parseval frame assumption in \cite[Lemma 2.1]{AW} by the Bessel sequence condition with bound $1$. Then, using the pinball principle \cite[Theorem 6.9]{BCMS} we can generalize \cite[Lemma 2.1]{AW} to the infinite dimensional setting. Alternatively, we can use the fact that any sequence of partitions of the compact space $\{1, \ldots, r\}^\N$ has a cluster point, see \cite[Theorem 3.1]{AW}. The details are explained in the proof of \cite[Lemma 2.8]{Bow}, which shows how to deduce infinite dimensional Weaver's $KS_r$ conjecture from its finite dimensional counterpart. Hence, \cite[Corollary 2.2]{AW} also extends to the setting of a separable Hilbert space $\mathcal H$. Finally, the proof of \cite[Lemma 2.3]{AW} extends verbatim to infinite dimensions.  
\end{proof}

Lemma \ref{awl} implies approximate Lyapunov's theorem for discrete frames due to Akemann and Weaver \cite[Theorem 2.4]{AW}. This result also holds in the infinite dimensional setting, where $C>0$ denotes a universal constant.

\begin{theorem}[Akemann and Weaver]
\label{aw}
Suppose $\{\phi_i\}_{i\in I}$ is a Bessel family with bound $1$ in a separable Hilbert space $\mathcal{H}$, which consists of vectors of norms $\|\phi_i\|^2\leq \ve$, where $\ve>0$. Suppose that $0\le \tau_i \le 1$ for all $i\in I$. Then, there exists a subset of indices $I_0 \subset I$ such that
\begin{equation}\label{aw0}
\bigg\| \sum_{i\in I_0} \phi_i \otimes \phi_i - \sum_{i\in I} \tau_i \phi_i \otimes \phi_i \bigg\| \le C  \ve^{1/8}.
\end{equation}
\end{theorem}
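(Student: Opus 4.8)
The plan is to reduce the weighted statement to the single-parameter Lemma \ref{awl} by partitioning the index set according to the size of the weights, so that on each piece the weights are nearly constant. Fix a small parameter $\delta\in(0,1)$ to be optimized at the end, set $M=\lceil 1/\delta\rceil$, and partition $I$ into the disjoint level sets
\[
G_m = \{ i\in I : (m-1)\delta \le \tau_i < m\delta\}, \qquad m=1,\dots,M,
\]
placing $\tau_i=1$ in $G_M$. Each subfamily $\{\phi_i\}_{i\in G_m}$ is again Bessel with bound $1$ and consists of vectors with $\|\phi_i\|^2\le\ve$, so Lemma \ref{awl} applies to it with the constant weight $\tau^{(m)} := (m-1)\delta\in[0,1]$. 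Writing $S_m = \sum_{i\in G_m}\phi_i\otimes\phi_i$ for the frame operator of the $m$-th subfamily, I obtain subsets $I_0^{(m)}\subset G_m$ with
\[
\bigg\| \sum_{i\in I_0^{(m)}}\phi_i\otimes\phi_i - \tau^{(m)} S_m\bigg\| \le C\ve^{1/4}.
\]
Because the $G_m$ are pairwise disjoint, $I_0 := \bigcup_{m=1}^M I_0^{(m)}$ is a legitimate subset of $I$ and $\sum_{i\in I_0}\phi_i\otimes\phi_i = \sum_{m=1}^M \sum_{i\in I_0^{(m)}}\phi_i\otimes\phi_i$, so no double-counting occurs.

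Next I would estimate the deviation from the target by splitting it into a quantization error and an accumulated selection error. Writing $T=\sum_{i\in I}\tau_i\phi_i\otimes\phi_i$ and letting $r(i)$ denote the index $m$ with $i\in G_m$, one has the decomposition
\[
\sum_{i\in I_0}\phi_i\otimes\phi_i - T = \sum_{m=1}^M\Big(\sum_{i\in I_0^{(m)}}\phi_i\otimes\phi_i - \tau^{(m)}S_m\Big) + \sum_{i\in I}\big(\tau^{(r(i))}-\tau_i\big)\phi_i\otimes\phi_i .
\]
The first sum is controlled by the triangle inequality by $M\cdot C\ve^{1/4}\le (1/\delta+1)C\ve^{1/4}$. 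For the second sum I use that it is a single weighted frame operator with real coefficients $\tau^{(r(i))}-\tau_i$ of absolute value $<\delta$; since $\{\phi_i\}$ is Bessel with bound $1$, testing the corresponding self-adjoint operator against a unit vector bounds its norm by $\delta$.

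The crucial point, and the only quantitative subtlety, is that these two errors behave oppositely in $\delta$: the quantization error recombines across all groups into one operator of norm $\le\delta$ and does \emph{not} accumulate, whereas the per-group selection errors must be summed and grow like $M\sim 1/\delta$. Balancing $\delta$ against $C\ve^{1/4}/\delta$ by taking $\delta=\ve^{1/8}$ yields a total bound of order $\ve^{1/8}$, and absorbing the numerical factors into a (possibly larger) universal constant gives \eqref{aw0}. I expect the only place requiring care to be this balancing, together with the observation that quantization errors do not accumulate; the selection step itself is immediate from Lemma \ref{awl}, and disjointness of the level sets removes any overlap in the union.
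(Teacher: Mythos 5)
Your argument is correct and is essentially the derivation the paper itself relies on: Theorem \ref{aw} is stated without proof, deferring to \cite[Theorem 2.4]{AW}, and the proof there is exactly your weight-quantization scheme --- partition $I$ by the level of $\tau_i$ into roughly $\ve^{-1/8}$ groups, apply Lemma \ref{awl} with a constant weight on each group, and note that the quantization errors reassemble into a single weighted Bessel operator of norm at most $\delta$ while the per-group selection errors accumulate to about $C\ve^{1/4}/\delta$, so $\delta=\ve^{1/8}$ balances the two. The only point worth adding is the trivial one that for $\ve\ge 1$ the claim already follows from the Bessel bound (both operators have norm at most $1$), so one may assume $\ve<1$, which is needed for your requirement $\delta\in(0,1)$.
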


Theorem \ref{aw} can be used to show Lyapunov's theorem for continuous frames over non-atomic measure spaces. However, Theorem \ref{lya} can also be shown directly without employing Theorem \ref{aw}, which relies on the solution of the Kadison-Singer problem. As in the discrete case of Theorem \ref{aw}, the lower frame bound does not play any role. Hence, all of our results hold for continuous Bessel families. 

\begin{theorem}\label{lya}
Let $(X,\mu)$ be a non-atomic $\sigma$-finite measure space.
Suppose that $\{\phi_t\}_{t\in X}$ is a continuous Bessel family in $\mathcal H$.  For any measurable function $\tau: X \to [0,1]$, 
consider a weighted frame operator
\[
S_{\sqrt{\tau}\phi,X} f= \int_X \tau(t) \langle f, \phi_t \rangle \phi_t d\mu(t)
\qquad f\in \mathcal H.
\]
Then, for any $\ve>0$, there exists a measurable set $E \subset X$ such that
\begin{equation}\label{lya1}
||S_{\phi,E} - S_{\sqrt{\tau}\phi,X}||<\ve.
\end{equation}
\end{theorem}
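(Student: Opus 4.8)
The plan is to combine the approximation Lemma \ref{approx} with the intermediate value property of non-atomic measures, the latter playing the role that the Kadison--Singer solution (via Theorem \ref{aw}) plays in the discrete setting. The key structural observation is that on a non-atomic space one can split a measurable set into pieces of \emph{arbitrary} prescribed measure, so the selection step that is only approximate for discrete frames becomes \emph{exact} here. Note also that the measure-theoretic reduction to the interval $[0,\mu(X))$ of Theorem \ref{p6} is not actually needed, since the argument works directly on any non-atomic $\sigma$-finite space.

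First I would reduce to a countably-valued family. Given $\ve>0$, apply Lemma \ref{approx} with parameter $\ve/2$ to obtain a countably-valued continuous Bessel family $\{\psi_t\}_{t\in X}$ with
\[
\|S_{\sqrt{\sigma}\phi,X} - S_{\sqrt{\sigma}\psi,X}\| < \ve/2 \qquad\text{for every measurable } \sigma: X\to[0,1].
\]
Taking $\sigma=\tau$ controls $\|S_{\sqrt{\tau}\phi,X}-S_{\sqrt{\tau}\psi,X}\|$, while taking $\sigma=\chi_E$ (for which $S_{\sqrt{\chi_E}\phi,X}=S_{\phi,E}$) controls $\|S_{\phi,E}-S_{\psi,E}\|$ for \emph{any} measurable $E$ I later select. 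By the triangle inequality it then suffices to produce $E$ with $S_{\psi,E}=S_{\sqrt{\tau}\psi,X}$ exactly, since the two approximation errors sum to less than $\ve$.

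Next I would carry out the exact selection for the countably-valued family. Writing $\psi_t=v_n$ on the pieces $X_n$ of a countable measurable partition of $X$ as in Remark \ref{r2.3}, a direct computation gives
\[
S_{\sqrt{\tau}\psi,X} = \sum_n c_n\, v_n\otimes v_n, \qquad c_n=\int_{X_n}\tau\,d\mu, \qquad\text{and}\qquad S_{\psi,E}=\sum_n \mu(E\cap X_n)\, v_n\otimes v_n.
\]
Since $0\le\tau\le 1$ we have $0\le c_n\le \mu(X_n)$, and the Bessel condition forces $\mu(X_n)<\infty$ whenever $v_n\ne 0$. Because $\mu$ is non-atomic, for each such $n$ there is a measurable $E_n\subset X_n$ with $\mu(E_n)=c_n$ (taking $E_n=\emptyset$ when $v_n=0$); setting $E=\bigcup_n E_n$ yields $\mu(E\cap X_n)=c_n$ for every relevant $n$, whence $S_{\psi,E}=S_{\sqrt{\tau}\psi,X}$.

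The conceptual heart, and the only genuine obstacle, is the exact splitting of each $X_n$, which relies entirely on non-atomicity; the remaining points (measurability of $E$ as a countable union, convergence of the series in the strong operator topology, and the bookkeeping of the two $\ve/2$ errors) are routine. This is precisely why no appeal to the solution of the Kadison--Singer problem is needed: the finite-measure ``atoms'' of the discrete problem, which can only be included or omitted wholesale, are here replaced by non-atomic pieces that may be subdivided to match $c_n$ on the nose.
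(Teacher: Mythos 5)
Your proposal is correct and follows essentially the same route as the paper's own proof: reduce to a countably-valued Bessel family via Lemma \ref{approx}, then use non-atomicity to pick $E_n\subset X_n$ with $\mu(E_n)=\int_{X_n}\tau\,d\mu$ so that $S_{\psi,E}=S_{\sqrt{\tau}\psi,X}$ holds exactly, and finish with the triangle inequality. This is precisely the paper's second, ``direct'' argument that bypasses Theorem \ref{aw}, and you correctly observe that the Akemann--Weaver/Kadison--Singer machinery and the reduction to the interval are not needed here.
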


\begin{proof}
Let $\{\psi_t\}_{t\in X}$ be continuous Bessel family as in Lemma \ref{approx}. Since it takes only countably many values, there exists a sequence $\{\tilde \psi_n\}_{n\in N}$ in $\mathcal H$ and a partition $\{X_n\}_{n\in\N}$ of $X$ such that
\begin{equation}\label{lya2}
\psi_t= \tilde \psi_n \quad\text{for all }t\in X_n, \ n\in \N.
\end{equation}
Since $\{\psi_t\}_{t\in X}$ is Bessel, we have $\mu(X_n)<\infty$ for all $n$ such that $\tilde \psi_n \ne 0$.
Moreover, by subdividing sets $X_n$ if necessary we can assume that 
\begin{equation}\label{lya3}
||\tilde \psi_n||^2 \mu(X_n) \le \ve^2 \qquad\text{for all }n\in\N.
\end{equation}
This is possible since the measure $\mu$ is non-atomic.
Then, the continuous frame $\{\psi_t\}_{t\in X}$ is equivalent to a discrete frame 
\[
\{\phi_n =\sqrt{\mu(X_n)}\psi_n \}_{n\in\N}.
\]
More precisely, for any measurable function $\tau: X \to [0,1]$, the frame operator $S_{\sqrt{\tau}\psi,X}$ of a continuous Bessel family $\{\sqrt{\tau(t)}\psi_t\}_{t\in X}$ coincides with the frame operator of a discrete Bessel sequence
\begin{equation}\label{lya4}
\{\sqrt{\tau_n} \phi_n\}_{n\in \N} \qquad\text{where }
 \tau_n=\int_{X_n}\tau(t) d\mu(t).
\end{equation}
At this moment, one is tempted to apply Theorem \ref{aw}, since  \eqref{lya3} guarantees that its assumptions are satisfied. This might require rescaling to guarantee that the Bessel bound is $\le1$.
Hence, there exists an index set $I_0 \subset I:=\N$ such that \eqref{aw0} holds. By \eqref{lya2} and \eqref{lya4},
\[
\sum_{n\in I_0} \phi_n \otimes \phi_n = \int_E \psi_t \otimes \psi_t d\mu(t) = S_{\psi, E}
\qquad\text{where }
E= \bigcup_{n\in I_0} X_n.
\] 
Hence, by Lemma \ref{approx} and Theorem \ref{aw} we have
\[
\begin{aligned}
||S_{\phi,E} - S_{\sqrt{\tau}\phi,X}|| &\le
||S_{\phi,E} - S_{\psi,E}||+ ||S_{\psi,E}- S_{\sqrt{\tau}\psi,X}||+||S_{\sqrt{\tau}\psi,X} - S_{\sqrt{\tau}\phi,X}||
\\
& \le \ve + \bigg\| \sum_{n\in I_0} \phi_n \otimes \phi_n - \sum_{n\in \N} \tau_n \phi_n \otimes \phi_n \bigg\| +\ve
\le 2\ve+ C\ve^{1/8}.
\end{aligned}
\]

However, one can easily avoid using Theorem \ref{aw} as follows. Since $\mu$ is non-atomic, we can find subsets $E_n \subset X_n$ be such that $\mu(E_n)=\tau_n \mu(X_n)$. Define $E= \bigcup_{n\in \N} E_n$. 
Then, a simple calculation shows that
\[
S_{\psi,E}= S_{\sqrt{\tau}\psi,X}.
\]
Hence,
\[
||S_{\phi,E} - S_{\sqrt{\tau}\phi,X}|| 
\le
||S_{\phi,E} - S_{\psi,E}||+||S_{\sqrt{\tau}\psi,X} - S_{\sqrt{\tau}\phi,X}||
 \le 2\ve.
\]
Since $\ve>0$ is arbitrary, this shows \eqref{lya1}. 
\end{proof}

Theorem \ref{lya6} implies the following variant of Lyapunov's theorem in a spirit of Uhl's theorem \cite{Uh}, see also \cite[Theorem IX.10]{DU}. 

\begin{theorem}\label{lyu}
Let $(X,\mu)$ be a non-atomic measure space.
Suppose that $\{\phi_t\}_{t\in X}$ is a continuous Bessel family in $\mathcal H$. Let $\mathcal S$ be the set of all partial frame operators 
\begin{equation}\label{lyu1}
\mathcal S=\{ S_{\phi, E}: E \subset X \text{ is measurable} \}
\end{equation}
Then, the operator norm closure $\overline{\mathcal S} \subset B(\mathcal H)$ is convex.
\end{theorem}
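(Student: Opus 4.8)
The plan is to reduce the convexity of $\overline{\mathcal S}$ to the approximation statement of Theorem \ref{lya}, whose content is precisely that every weighted frame operator is an operator-norm limit of genuine partial frame operators. The crucial algebraic observation is that a convex combination of two partial frame operators is itself a weighted frame operator. Indeed, given measurable sets $E_1, E_2 \subset X$ and $\lambda \in [0,1]$, set
\[
\tau = \lambda \chi_{E_1} + (1-\lambda)\chi_{E_2}.
\]
This is a measurable function taking values in $\{0,\lambda,1-\lambda,1\} \subset [0,1]$, and by linearity of the integral one checks directly from the defining formulas that
\[
\lambda S_{\phi,E_1} + (1-\lambda) S_{\phi,E_2} = S_{\sqrt{\tau}\phi,X}.
\]

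First I would pass to the $\sigma$-finite setting. Since $S_{\phi,E}$ depends only on $E \cap X'$, where $X' = \{t \in X: \phi_t \ne 0\}$ is the $\sigma$-finite support furnished by Proposition \ref{p1}, there is no loss of generality in assuming $(X,\mu)$ is $\sigma$-finite, so that Theorem \ref{lya} is applicable. Next I would apply Theorem \ref{lya} to the function $\tau$ above: for every $\ve>0$ there is a measurable $E\subset X$ with $\|S_{\phi,E} - S_{\sqrt{\tau}\phi,X}\| < \ve$. As $S_{\phi,E} \in \mathcal S$ and $\ve$ is arbitrary, this shows that
\[
\lambda S_{\phi,E_1} + (1-\lambda) S_{\phi,E_2} = S_{\sqrt{\tau}\phi,X} \in \overline{\mathcal S}.
\]
Thus every convex combination of two elements of $\mathcal S$ already lies in the closure $\overline{\mathcal S}$.

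Finally I would upgrade this to convexity of the full closure by a routine limiting argument. Given $T_1, T_2 \in \overline{\mathcal S}$ and $\lambda \in [0,1]$, choose sequences of partial frame operators $S_{\phi,E_1^{(n)}} \to T_1$ and $S_{\phi,E_2^{(n)}} \to T_2$ in operator norm. By the previous step each convex combination $\lambda S_{\phi,E_1^{(n)}} + (1-\lambda) S_{\phi,E_2^{(n)}}$ belongs to $\overline{\mathcal S}$, and these converge in norm to $\lambda T_1 + (1-\lambda) T_2$; since $\overline{\mathcal S}$ is closed, the limit lies in $\overline{\mathcal S}$, establishing convexity. I do not expect a genuine obstacle here: all of the analytic difficulty is already absorbed into Theorem \ref{lya}, and the only points needing care are verifying that $\tau$ is $[0,1]$-valued and that the identity for the convex combination holds, both of which are immediate.
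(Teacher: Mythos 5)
Your proposal is correct and follows essentially the same route as the paper: the paper also reduces everything to Theorem \ref{lya} by observing that the set $\mathcal T$ of weighted frame operators $S_{\sqrt{\tau}\phi,X}$ is convex, contains $\mathcal S$ (via $\tau=\chi_E$), and has the same closure as $\mathcal S$ by Theorem \ref{lya}; your explicit identity $\lambda S_{\phi,E_1}+(1-\lambda)S_{\phi,E_2}=S_{\sqrt{\tau}\phi,X}$ with $\tau=\lambda\chi_{E_1}+(1-\lambda)\chi_{E_2}$ is just the concrete instance of that convexity. The only difference is presentational (you also spell out the $\sigma$-finiteness reduction and the final limiting step, which the paper leaves implicit), so there is nothing to correct.
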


\begin{proof}
Note that set
\[
\mathcal T= \{S_{\sqrt{\tau}\phi,X}: \tau \text{ is any measurable function }X \to [0,1]\}
\]
is a convex subset of $B(\mathcal H)$. Hence, its operator norm closure $\overline{\mathcal T}$ is also convex. If $\tau=\chi_E$ is a characteristic function on $E \subset X$, then $S_{\sqrt{\tau}\phi,X}=S_{\phi, E}$. Hence, $\mathcal S \subset \mathcal T$. By Theorem \ref{lya} their closures are the same $\overline{\mathcal T}=\overline{\mathcal S}$.
\end{proof}

\begin{remark}
Note that the positive operator valued measure $E \mapsto S_{\phi, E}$ does not have to be of bounded variation as required by \cite[Theorem IX.10]{DU}. Moreover, the closure of $\mathcal S$ might not be compact. Hence, Theorem \ref{lyu} can not be deduced from Uhl's theorem mentioned above.

\end{remark}

The following example shows that taking closure in Theorem \ref{lyu} is necessary.

\begin{ex}
Consider a continuous Bessel family $\{\phi_t\}_{t\in [0,1]}$ with values in $L^2([0,1])$ given by $\phi_t=\chi_{[0,t]}$. We claim that there is no measurable set $E\subset [0,1]$ such that
\begin{equation}\label{ex0}
S_{\phi,E} = \tfrac 12 S_{\phi,[0,1]}.
\end{equation}
Otherwise, we would have
\begin{equation}\label{ex1}
\frac 12 \int_0^1 | \langle f, \phi_t \rangle|^2 dt = 
\frac 12 \int_0^1 \bigg| \int_0^t f(s) ds \bigg|^2 dt =
 \int_E \bigg| \int_0^t f(s) ds \bigg|^2 dt
\qquad\text{for } f\in L^2([0,1]).
\end{equation}
For any $0\le a<b\le 1$, define $f_n(t)=n \chi_{[a,a+1/n]} - n \chi_{[b-1/n,b]}$. Then, $g_n(t)=\int_0^t f_n(s) ds$ is a piecewise linear function with knots at $(a,0)$, $(a+1/n,1)$, $(b-1/n, 1)$, and $(b,0)$, where $n> 2/(b-a)$. Applying \eqref{ex1} and taking the limit as $n\to \infty$ yields
\[
\frac{b-a}2=\frac 12 \lambda([a,b]) =\lambda(E\cap [a,b]).
\]
Since $[a,b]$ is an arbitrary subinterval of $[0,1]$, this contradicts the Lebesgue Differentiation Theorem. Hence, no set can fulfill \eqref{ex0}.
\end{ex}

We end this section by showing a more precise version of Theorem \ref{lya} for continuous Bessel families over a finite non-atomic measure space.

\begin{lemma}\label{lya6}
Suppose that $\{\phi_t\}_{t\in [0,1]}$ is a continuous Bessel family in $\mathcal H$. Let $S$ be its frame operator. Then, for any $\ve>0$ and $0< \tau < 1$, there exists a Lebesgue measurable set $E \subset [0,1]$ such that
\begin{equation}\label{lya7}
||S_{\phi,E} - \tau S|| \le \ve
\qquad\text{and}\qquad
\lambda(E) \le \tau.
\end{equation}
\end{lemma}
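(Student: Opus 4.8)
The plan is to re-run the Kadison--Singer-free construction from the second half of the proof of Theorem~\ref{lya}, specialized to the constant weight $\tau(t)\equiv\tau$, while keeping careful track of the Lebesgue measure of the resulting set. The operator-norm estimate in \eqref{lya7} is already morally contained in Theorem~\ref{lya}, because for a constant weight one has $S_{\sqrt{\tau}\phi,[0,1]}=\tau S$; the only genuinely new feature is the volume constraint $\lambda(E)\le\tau$, so I would organize the argument so that this bound becomes transparent.

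First I would apply Lemma~\ref{approx} to produce a single countably-valued continuous Bessel family $\{\psi_t\}_{t\in[0,1]}$ with $||S_{\sqrt{\sigma}\phi,[0,1]}-S_{\sqrt{\sigma}\psi,[0,1]}||<\ve$ for \emph{every} measurable $\sigma:[0,1]\to[0,1]$ at once. Using Remark~\ref{r2.3} I write $\psi_t=\tilde\psi_n$ on a partition $\{X_n\}_{n\in\N}$ of $[0,1]$, each $X_n$ of finite measure since $\lambda([0,1])=1$. For the constant weight, the local masses are $\tau_n=\int_{X_n}\tau\,d\lambda=\tau\,\lambda(X_n)$, and because $\tau\le1$ we have $\tau_n\le\lambda(X_n)$. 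Non-atomicity of Lebesgue measure then lets me choose disjoint $E_n\subset X_n$ with $\lambda(E_n)=\tau_n=\tau\,\lambda(X_n)$, and I set $E=\bigcup_n E_n$. The same one-line computation as in Theorem~\ref{lya} gives $S_{\psi,E}=S_{\sqrt{\tau}\psi,[0,1]}=\tau S_\psi$, where $S_\psi$ is the frame operator of $\psi$, so that with $\sigma=\chi_E$ and $\sigma\equiv\tau$ in Lemma~\ref{approx} the triangle inequality
\[
||S_{\phi,E}-\tau S||\le ||S_{\phi,E}-S_{\psi,E}||+||S_{\sqrt{\tau}\psi,[0,1]}-S_{\sqrt{\tau}\phi,[0,1]}||\le 2\ve
\]
delivers the norm estimate after replacing $\ve$ by $\ve/2$.

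The new ingredient, which I would highlight, is that the volume bound now follows by pure additivity: since the $E_n$ are disjoint and $\{X_n\}$ partitions $[0,1]$,
\[
\lambda(E)=\sum_n \lambda(E_n)=\tau\sum_n \lambda(X_n)=\tau\,\lambda([0,1])=\tau.
\]
I do not anticipate a serious obstacle. The one point requiring care is that it is precisely the \emph{constancy} of the weight that pins each local mass to $\lambda(E_n)=\tau\,\lambda(X_n)$, so that the pieces sum to exactly $\tau$ with no slack; had the weight been a general $\tau(t)$ the masses $\tau_n$ would not aggregate in this way. It is also worth noting that this measure bound is completely independent of the quality of the approximation $\psi$, so the two requirements in \eqref{lya7} do not compete, and the same $E$ serves both.
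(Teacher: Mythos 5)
Your argument is correct, but it takes a genuinely different route from the paper's. The paper proves Lemma \ref{lya6} by using Theorem \ref{lya} only as a black box: it recursively and approximately halves $[0,1]$ into a dyadic tree of sets $E_\sigma$ satisfying $\|S_{\phi,E_{\sigma 0}}-\tfrac12 S_{\phi,E_\sigma}\|<4^{-n-1}\ve$, arranges $\lambda(E_{\sigma 0})\le\tfrac12\lambda(E_\sigma)$ by swapping the two halves if necessary, and then assembles $E$ from the binary expansion of $\tau$ via a telescoping estimate. You instead re-enter the direct, Kadison--Singer-free construction inside the proof of Theorem \ref{lya} and observe that for a constant weight the sets $E_n\subset X_n$ can be chosen with $\lambda(E_n)=\tau\,\lambda(X_n)$, so that $\lambda(E)=\tau$ exactly by countable additivity, while the operator estimate follows from Lemma \ref{approx} applied with the two weights $\chi_E$ and the constant $\tau$ (here you correctly exploit that a single approximant $\psi$ from Lemma \ref{approx} works simultaneously for all weights). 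Your proof is shorter, gives the measure bound with equality rather than an inequality, and in fact extends verbatim to a non-constant weight, yielding $\mu(E_n)=\int_{X_n}\tau\,d\mu$ and hence $\mu(E)=\int_X\tau\,d\mu$; this would also shortcut Theorem \ref{lya13} without passing through Carath\'eodory's isomorphism theorem. What the paper's longer argument buys in exchange is modularity: it depends only on the \emph{statement} of Theorem \ref{lya}, not on the internals of its proof, so it would survive even in a setting where the approximating set comes with no measure information (for instance, if one were forced to produce $E$ via the Akemann--Weaver route of Theorem \ref{aw}, where the selected index set carries no control on $\mu(E)$).
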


\begin{proof}
Let $\Sigma$ denote the set of all finite sequences of $0$'s and $1$'s. We shall construct inductively the family $\{E_\sigma\}_{\sigma\in \Sigma}$ of measurable subsets of $[0,1]$ in the following way. If $\sigma$ is an empty word, then we let $E_\sigma=[0,1]$. Assume that $E_\sigma$ is constructed for a word $\sigma$ of length $n\in\N_0$. By Theorem \ref{lya}, there exists a measurable subset $E_{\sigma 0} \subset E_\sigma$ such that
\begin{equation}\label{lya8}
||S_{\phi,E_{\sigma 0}} - \tfrac 12 S_{\phi, E_\sigma} || < 4^{-n-1}\ve.
\end{equation}
Letting $E_{\sigma 1} = E_{\sigma} \setminus E_{\sigma 0}$, we also have
\begin{equation}\label{lya9}
||S_{\phi,E_{\sigma 1}} - \tfrac 12 S_{\phi, E_\sigma} ||
=
||S_{\phi,E_{\sigma 0}} - \tfrac 12 S_{\phi, E_\sigma} || < 4^{-n-1}\ve.
\end{equation}
Moreover, by swapping these sets if necessary we also have
\begin{equation}\label{lya11}
\lambda(E_{\sigma 0}) \le \tfrac 12 \lambda(E_\sigma) \le \lambda(E_{\sigma 1}).
\end{equation}

Let $\Sigma_n$ be the set of all words in $\Sigma$ of length $n$.
For any $n\in \N$, the family $\{E_\sigma\}_{\sigma\in \Sigma_n}$ is a partition of $[0,1]$. Moreover, we have
\begin{equation}\label{lya10}
||S_{\phi,E_\sigma} - 2^{-n} S||< 2^{-n}\ve \qquad\text{for }\sigma \in\Sigma_n.
\end{equation}
To show \eqref{lya10} we will use the telescoping argument as follows. Let $\sigma_k$, $k=0,\ldots,n$, be the word consisting of the first $k$ letters of $\sigma\in \Sigma_n$. Then, by \eqref{lya8} and \eqref{lya9}
\[
\begin{aligned}
||S_{\phi,E_\sigma} - 2^{-n} S || 
&\le \sum_{k=0}^{n-1} ||2^{k+1-n} S_{\phi,E_{\sigma_{k+1}}}-  2^{k-n} S_{\phi,E_{\sigma_k}}||
\\
&= \sum_{k=0}^{n-1} 2^{k+1-n} ||S_{\phi,E_{\sigma_{k+1}}}-  \tfrac 12 S_{\phi,E_{\sigma_k}}|| < \sum_{k=0}^{n-1} 2^{k+1-n} 4^{-k-1}\ve < 2^{-n} \ve.
\end{aligned}
\]

Suppose $0<\tau<1$ has a binary expansion $\tau=\sum_{n=1}^\infty \tau(n)2^{-n}$, where $\tau(n)=0,1$. For each $n\in\N$, let 
\[
\Sigma'_n= \{ \sigma \in \Sigma_n: \sigma < \tau(1)\ldots\tau(n) \}, \qquad F_n=\bigcup_{\sigma \in \Sigma'_n} E_\sigma,
\qquad
E= \bigcup_{n=1}^\infty F_n,
\]
where $<$ denotes lexicographic order in $\Sigma_n$. By \eqref{lya10}
\[
\bigg\|S_{\phi,F_n} - \frac{\#|\Sigma'_n|}{2^n} S \bigg\| \le \sum_{\sigma\in\Sigma'_n} || S_{\phi,E_\sigma} - 2^{-n}S|| <\ve.
\]
Likewise, we use \eqref{lya11} and induction on $n$ to deduce that
\[
\lambda(F_n) \le \frac{\#|\Sigma'_n|}{2^n}.
\]
Since $F_n \subset F_{n+1}$ and $\frac{\#|\Sigma'_n|}{2^n} \to \tau$ as $n\to \infty$, we obtain \eqref{lya7}. 
\end{proof}

\begin{theorem}\label{lya13}
Let $(X,\mu)$ be a finite non-atomic measure space. Suppose that $\{\phi_t\}_{t\in X}$ is a continuous Bessel family in $\mathcal H$. Then, for any measurable function $\tau: X \to [0,1]$ and $\ve>0$, there exists a measurable subset $E \subset X$ such that
\begin{equation}\label{lya12}
||S_{\phi,E} - S_{\sqrt{\tau}\phi,X} || \le \ve
\qquad\text{and}\qquad
\mu(E) \le \int_X \tau d\mu.
\end{equation}
\end{theorem}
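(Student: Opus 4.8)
The plan is to reduce the arbitrary measurable weight $\tau$ to the piecewise-constant case, which is governed by Lemma \ref{lya6}, and to arrange the approximation so that the measure bound $\mu(E)\le\int_X\tau\,d\mu$ falls out automatically. The decisive idea is to approximate $\tau$ \emph{from below} by a simple function, so that the integrals obey $\int_X\tau'\,d\mu\le\int_X\tau\,d\mu$, and then to apply the measure-controlled construction of Lemma \ref{lya6} on each level set.

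First I would fix the Bessel bound $B$ from \eqref{cf1} and, given $\ve>0$, choose $\delta>0$ with $\delta B\le\ve/2$. Flooring $\tau$ to the grid $\delta\Z$ yields a simple measurable function $\tau'=\sum_{j=1}^N c_j\chi_{X_j}$ with $0\le\tau'\le\tau$, $\|\tau-\tau'\|_\infty<\delta$, finitely many values $c_j\in[0,1]$, and $\{X_j\}_{j=1}^N$ a finite measurable partition of $X$. Since $\tau-\tau'\ge 0$, the difference $S_{\sqrt{\tau}\phi,X}-S_{\sqrt{\tau'}\phi,X}=S_{\sqrt{\tau-\tau'}\phi,X}$ is a positive operator, so its norm equals $\sup_{||f||=1}\int_X(\tau-\tau')|\langle f,\phi_t\rangle|^2\,d\mu\le\|\tau-\tau'\|_\infty B\le\ve/2$. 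This reduces the problem to matching $S_{\sqrt{\tau'}\phi,X}=\sum_{j=1}^N c_j S_{\phi,X_j}$ to within $\ve/2$ by a partial frame operator, while respecting the measure budget.

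Second, on each piece I would invoke the constant-weight case \emph{together with its measure control}. Although Lemma \ref{lya6} is stated for $[0,1]$, its proof applies verbatim to any finite non-atomic measure space $(X_j,\mu|_{X_j})$: the dyadic construction only uses Theorem \ref{lya} on measurable subsets and the halving property of a non-atomic measure, and the resulting estimate reads $\mu(E_j)\le c_j\mu(X_j)$; alternatively one transfers $(X_j,\mu|_{X_j})$ to an interval via Theorem \ref{p6} and applies Lemma \ref{lya6} there. Thus for each $j$ with $0<c_j<1$ there is $E_j\subset X_j$ with $||S_{\phi,E_j}-c_j S_{\phi,X_j}||\le\ve/(2N)$ and $\mu(E_j)\le c_j\mu(X_j)$, while the cases $c_j=0$ (take $E_j=\emptyset$) and $c_j=1$ (take $E_j=X_j$) hold exactly. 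Setting $E=\bigcup_{j=1}^N E_j$, disjointness gives $S_{\phi,E}=\sum_j S_{\phi,E_j}$, so $||S_{\phi,E}-S_{\sqrt{\tau'}\phi,X}||\le\sum_j||S_{\phi,E_j}-c_j S_{\phi,X_j}||\le\ve/2$; combined with the previous paragraph, the triangle inequality yields the norm bound in \eqref{lya12}. For the measure, $\mu(E)=\sum_j\mu(E_j)\le\sum_j c_j\mu(X_j)=\int_X\tau'\,d\mu\le\int_X\tau\,d\mu$, as required.

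The main obstacle is the measure constraint itself, since the norm statement alone is already Theorem \ref{lya}. Two points carry the extra content: approximating $\tau$ from below, which guarantees $\int_X\tau'\,d\mu\le\int_X\tau\,d\mu$ and hence the final budget, and the per-piece bound $\mu(E_j)\le c_j\mu(X_j)$, which is precisely the measure-controlled refinement supplied by Lemma \ref{lya6} rather than by Theorem \ref{lya}. The one piece of technical care I anticipate is confirming that the dyadic construction of Lemma \ref{lya6}, and especially its measure estimate for the lexicographically selected sets, survives the passage from $[0,1]$ to a general finite non-atomic space; I expect this, rather than the assembly above, to be the delicate step.
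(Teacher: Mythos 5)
Your proof is correct and follows essentially the same route as the paper: reduce to a finitely-valued weight, apply the measure-controlled Lemma \ref{lya6} (transferred to a general finite non-atomic space via the Carath\'eodory isomorphism) on each level set, and assemble the pieces. Your one refinement --- approximating $\tau$ from \emph{below} so that $\int_X \tau'\,d\mu \le \int_X \tau\,d\mu$ --- is a point the paper's reduction to finitely-valued $\tau$ leaves implicit, and it is indeed needed to preserve the measure budget.
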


\begin{proof}
First we observe that Lemma \ref{lya6} generalizes to the setting of a finite non-atomic measure space $(X,\mu)$. That is, if $\{\phi_t\}_{t\in X}$ is a continuous Bessel family, then for any $\ve>0$ and $0<\tau_0<1$, there exists a measurable set $E \subset X$ such that
\begin{equation}\label{lya14}
||S_{\phi,E} - \tau_0 S|| \le \ve
\qquad\text{and}\qquad
\mu(E) \le \tau_0  \mu(X).
\end{equation}
Indeed, by Proposition \ref{p5} and Theorem \ref{p6}, there exists a continuous Bessel family $\{\psi_t\}_{t\in I }$, over the interval $I=[0,\mu(X)]$ with the Lebesgue measure $\lambda$, which has the same distribution as $\{\phi_t\}_{t\in X}$. Note that there is no need to restrict the support of $\{\phi_t\}_{t\in X}$, since $X$ is a finite measure space. Hence, by a rescaled version of Lemma \ref{lya6}, there exists a measurable subset $\tilde E \subset I$ such
\[
||S_{\psi,\tilde E} - \tau_0 S|| \le \ve
\qquad\text{and}\qquad
\lambda(\tilde E) \le \tau_0  \lambda(I).
\]
Since the correspondence between $\{\phi_t\}_{t\in X}$ and $\{\psi_t\}_{t\in I }$ is given by Carath\'eodory's Theorem \ref{p4}, there exists a measurable set $E \subset X$, which is the image of $\tilde E$ under the isomorphism of measure algebras, such that $S_{\phi,E}=S_{\psi,\tilde E}$ and $\mu(E)=\lambda(\tilde E)$. This proves \eqref{lya14}.

 Suppose that $\tilde \tau: X \to [0,1]$ is another measurable function. Then,
\[
\begin{aligned}
||S_{\sqrt{\tau}\phi,X} - S_{\sqrt{\tilde \tau}\phi,X}||
& = \sup_{||f||=1} |\langle (S_{\sqrt{\tau}\phi,X} - S_{\sqrt{\tilde \tau}\phi,X})f,f \rangle |
\\
&= \sup_{||f||=1} \bigg| \int_X (\tau(t)-\tilde \tau(t)) |\langle f, \phi_t \rangle|^2 d\mu(t)  \bigg| \le ||\tau-\tilde \tau||_\infty ||S||.
\end{aligned}
\]
Hence, it suffices to show Theorem \ref{lya13} for functions taking finitely many values.

Suppose that $\tau$ takes only finitely many values, say $s_1,\ldots,s_n$. Then, the sets $X_i=\tau^{-1}(s_i)$, $i=1,\ldots,n$, form a partition of $X$. Now we apply the above variant of Lemma \ref{lya6} for continuous Bessel family $\{\phi_t\}_{t\in X_i}$ and $0<s_i<1$, to deduce the existence of a measurable subset $E_i \subset X_i$ such that
\begin{equation}\label{lya16}
||S_{\phi,E_i} - s_i S_{\phi,X_i} || \le \ve/n
\qquad\text{and}\qquad
\mu(E_i) \le s_i  \mu(X_i).
\end{equation}
In the case of $s_i=0$ or $1$, we take $E_i=\emptyset$ or $X_i$, resp. Let $E=\bigcup_{i=1}^n E_i$. 
By the triangle inequality and \eqref{lya16},
\[
||S_{\phi,E} - S_{\sqrt{\tau}\phi,X} || \le \sum_{i=1}^n ||S_{\phi,E_i} - S_{\sqrt{\tau}\phi,X_i} ||
=  \sum_{i=1}^n ||S_{\phi,E_i} -  s_i S_{\phi,X_i} || \le \ve.
\]
Moreover,
\[
\mu(E) = \sum_{i=1}^n \mu (E_i) \le \sum_{i=1}^n s_i \mu(X_i) = \int_X \tau d\mu.
\]
This shows \eqref{lya12}.
\end{proof}

\section{Positive compact operator-valued mappings} \label{S4}

In this section we extend Theorem \ref{lyu} to the special case of POVMs given by measurable mappings with values in positive compact operators.

\begin{definition}\label{cov}
Let $\mathcal K(\mathcal H)$ be the space of positive compact operators on a separable Hilbert space $\mathcal H$. Let $(X, \mu)$ be a measure space. We say that $T = \{T_t\}_{t\in X}: X \to \mathcal K(\mathcal H)$ is {\it compact operator-valued Bessel family} if:
\begin{enumerate}
\item 
for each $f,g \in\mathcal H$, the function $X \ni t \to \lan T_t f, g \ran \in \C$ is measurable, and 
\item
there exists a constant $B>0$ such that
\[
\int_X \lan T_t f,f \ran d\mu(t) \le B ||f||^2 \qquad \text{for all }f\in \mathcal H.
\]
\end{enumerate}
\end{definition}

\begin{remark} \label{cov2}
Observe that if $\{\phi_t\}_{t\in X}$ is a continuous Bessel family, then $T_t=\phi_t \otimes \phi_t$ is an example of compact operator-valued Bessel family. This corresponds to rank 1 operator-valued mappings. Since finite rank operators are a dense subset of $\mathcal K(\mathcal H)$ with respect to the operator nom, the space $\mathcal K(\mathcal H)$ is separable. A quick extension of Proposition \ref{p1} shows that every compact operator-valued Bessel family $(T_t)_{t\in X}$ is supported on a $\sigma$-finite set. Indeed, for any $f \in \mathcal H$, $||f||=1$, by Chebyshev's inequality we have
\[
\mu( \{ t \in X:  \langle T_t f, f \ran > 1/n \} ) \le B n<\infty.
\]
The rest of argument is the same as in Proposition \ref{p1}. 

Likewise, by the Pettis Measurability Theorem, the weak measurability (i) is equivalent to strong measurability. Consequently, the mapping  $t \mapsto T_t$ is a.e. uniform limit of a sequence of countably valued measurable functions $X \to \mathcal K(\mathcal H)$. Moreover, we have the following analogue of Lemma \ref{approx}.
\end{remark}

\begin{lemma}\label{cov4}
Suppose that $\{T_t\}_{t\in X}$ is a compact operator-valued Bessel family in $\mathcal H$. For any measurable function $\tau: X \to [0,1]$, define an operator $S_{\tau T}$ on $\mathcal H$, by
\begin{equation}\label{cov5}
S_{\tau T} f = \int_X \tau(t)T_t f d\mu(t) \qquad\text{for }f\in \mathcal H.
\end{equation}
Then for every $\ve>0$, there exists a compact operator-valued Bessel family $\{R_t\}_{t\in X}$, which takes only countably many values, such that for any measurable function $\tau: X \to [0,1]$ we have
\begin{equation}\label{cov6}
||S_{\tau T} - S_{\tau R}||\le \ve.
\end{equation}
\end{lemma}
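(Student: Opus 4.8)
The plan is to follow the template of the proof of Lemma \ref{approx}, replacing the vector $\phi_t$ by the positive compact operator $T_t$ and the quantity $|\langle f,\phi_t\rangle|^2$ by the quadratic form $\langle T_t f,f\rangle$. First I would carry out the measure-theoretic reductions exactly as before: by Remark \ref{cov2} the family $\{T_t\}_{t\in X}$ is supported on a $\sigma$-finite set, so we may assume $(X,\mu)$ is $\sigma$-finite; since a measurable mapping is a.e.\ constant on each atom and there are at most countably many atoms, the atomic part is already countably valued and contributes zero error, so we may assume $\mu$ is non-atomic. I also record that $t\mapsto ||T_t||$ is measurable: since each $T_t\ge 0$ one has $||T_t||=\sup_{f\in\mathcal D,\,||f||=1}\langle T_t f,f\rangle$ for a countable dense set $\mathcal D\subset\mathcal H$, a countable supremum of measurable functions.

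Next I would construct the approximant $\{R_t\}_{t\in X}$. Partition $X$ into measurable pieces $\{X_m\}_{m\in\N}$ with $\mu(X_m)\le 1$, which is possible since $\mu$ is non-atomic and $\sigma$-finite. By the strong measurability of $T$ recorded in Remark \ref{cov2} (Pettis Measurability Theorem), $t\mapsto T_t$ is an a.e.\ uniform limit of countably valued measurable functions whose values lie in the separable essential range of $T$, hence among positive compact operators. Consequently on each $X_m$ I may choose a countably valued measurable $R$ with values in $\mathcal K(\mathcal H)$ satisfying
\[
||R_t-T_t||\le \frac{\ve}{2^m}\qquad\text{for }t\in X_m,
\]
and glueing these over $m$ yields a single countably valued measurable $R\colon X\to\mathcal K(\mathcal H)$. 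Since $t\mapsto||T_t-R_t||$ is measurable and $\mu(X_m)\le 1$, this gives
\[
\int_X ||T_t-R_t||\,d\mu(t)\le \sum_{m=1}^\infty \frac{\ve}{2^m}\mu(X_m)\le \ve .
\]
From $\langle R_t f,f\rangle\le\langle T_t f,f\rangle+||T_t-R_t||\,||f||^2$ and the Bessel bound for $T$ it follows that $R$ is a compact operator-valued Bessel family with bound $B+\ve$, so the approximant is admissible in the sense of Definition \ref{cov}.

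Finally I would establish \eqref{cov6} uniformly in $\tau$. For $||f||=1$ and any measurable $\tau\colon X\to[0,1]$, using $0\le\tau\le 1$ and $|\langle(T_t-R_t)f,f\rangle|\le ||T_t-R_t||$,
\[
|\langle (S_{\tau T}-S_{\tau R})f,f\rangle|\le \int_X \tau(t)\,|\langle (T_t-R_t)f,f\rangle|\,d\mu(t)\le \int_X ||T_t-R_t||\,d\mu(t)\le \ve .
\]
Since $T_t$ and $R_t$ are self-adjoint, so is $S_{\tau T}-S_{\tau R}$, whence $||S_{\tau T}-S_{\tau R}||=\sup_{||f||=1}|\langle(S_{\tau T}-S_{\tau R})f,f\rangle|\le\ve$. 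The decisive point is that this bound is independent of $\tau$, which is precisely what \eqref{cov6} requires.

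The main obstacle is the construction of $R$ in the second step: producing a single countably valued measurable approximant whose precision decays fast enough (of order $2^{-m}$ on a piece of measure $\le 1$) to make $\int_X ||T_t-R_t||\,d\mu$ converge over a possibly infinite measure space, while keeping every value positive and compact so that $R$ is again an admissible family. I would emphasize that, unlike Lemma \ref{approx}, the operator-valued case needs no partition by the size $||T_t||$: there is no difference-of-squares to expand, and the elementary bound $|\langle(T_t-R_t)f,f\rangle|\le ||T_t-R_t||$ is already independent of $||T_t||$, which simplifies the estimate considerably.
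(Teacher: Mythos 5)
Your proposal is correct and follows essentially the same route as the paper's proof: reduce to a $\sigma$-finite non-atomic space, partition $X$ into pieces $X_m$ of measure at most $1$, choose a countably valued measurable $R$ with $\|T_t-R_t\|\le \ve/2^m$ on $X_m$, and bound $\|S_{\tau T}-S_{\tau R}\|$ by $\int_X\|T_t-R_t\|\,d\mu\le\ve$ via self-adjointness. The only additions are your explicit checks that the approximant's values can be taken positive compact and that $R$ is again Bessel, which the paper leaves implicit.
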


\begin{proof}
Note that $S_{\tau T}$ is a well-defined bounded positive operator with norm $\le B$.
By Remark \ref{cov2} we can assume that $(X,\mu)$ is $\sigma$-finite. Moreover, we can assume that $\mu$ is non-atomic. For any $\ve>0$, we can find a partition $\{X_{n}\}_{n\in \N}$ of $X$ such that $\mu(X_{n}) \le 1$ for all $n\in\N$. Then, we can find a countably-valued measurable function $\{R_t\}_{t\in X}$ such that 
\[
||T_t - R_t || \le \frac{\ve}{2^n} \qquad\text{for }t\in X_{n}.
\]
Using the fact that operators \eqref{cov5} are self-adjoint, we have
\[
\begin{aligned}
||S_{\tau T} - S_{\tau R}||
& = \sup_{||f||=1} |\langle (S_{\tau T} - S_{\tau R})f,f \rangle | = \sup_{||f||=1} \bigg| \int_X \tau(t) \langle (T_t-R_t)f, f \rangle  d\mu(t)  \bigg| \\
& \le \int_X ||T_t - R_t || d\mu(t) \le \sum_{n=1}^\infty \frac{\ve}{2^n} \mu(X_n) \le \ve.
\end{aligned}
\]
\end{proof}

\begin{theorem}\label{cov8}
Suppose that $\{T_t\}_{t\in X}$ is a compact operator-valued Bessel family over a non-atomic measure space $(X,\mu)$. Define a positive operator-valued measure $\Phi$ on $X$ by
\begin{equation}\label{cov8a}
\Phi(E) = \int_E T_t d\mu(t) \qquad\text{for measurable } E \subset X.
\end{equation}
Then, the closure of the range of $\Phi$ is convex.
\end{theorem}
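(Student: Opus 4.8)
The plan is to follow the same architecture as the proof of Theorem \ref{lyu}, with the operator-valued approximation Lemma \ref{cov4} playing the role that Lemma \ref{approx} played there. First I would introduce the set
\[
\mathcal T = \{ S_{\tau T} : \tau \text{ is measurable } X \to [0,1] \},
\]
and observe that it is a convex subset of $B(\mathcal H)$: since $\tau \mapsto S_{\tau T}$ is affine and the collection of measurable functions $X \to [0,1]$ is convex, any convex combination $\lambda S_{\tau_1 T} + (1-\lambda) S_{\tau_2 T}$ equals $S_{(\lambda \tau_1 + (1-\lambda)\tau_2) T} \in \mathcal T$. Consequently $\overline{\mathcal T}$ is convex. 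Taking $\tau = \chi_E$ gives $S_{\chi_E T} = \Phi(E)$, so the range of $\Phi$ is contained in $\mathcal T$. Hence it suffices to prove that the two operator-norm closures coincide, i.e. that every $S_{\tau T}$ can be approximated in norm by operators of the form $\Phi(E)$; this yields $\overline{\mathrm{range}(\Phi)} = \overline{\mathcal T}$, and the convexity of $\overline{\mathcal T}$ then gives the claim.

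This reduces the theorem to an operator-valued analogue of Theorem \ref{lya}: for every measurable $\tau : X \to [0,1]$ and every $\ve > 0$ there is a measurable $E \subset X$ with $\|\Phi(E) - S_{\tau T}\| < \ve$. To prove it, I would first use Remark \ref{cov2} to restrict to the ($\sigma$-finite) support of $\{T_t\}$ and assume $\mu$ non-atomic, and then invoke Lemma \ref{cov4} to produce a countably-valued family $\{R_t\}$ with $\|S_{\sigma T} - S_{\sigma R}\| \le \ve$ for every measurable $\sigma : X \to [0,1]$. Writing $R_t = \tilde R_n$ for $t \in X_n$, where $\{X_n\}$ is the associated partition with $\mu(X_n) \le 1$, the operator $S_{\sigma R}$ becomes the norm-convergent sum $\sum_n \big(\int_{X_n} \sigma \, d\mu\big)\, \tilde R_n$.

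Now the non-atomicity of $\mu$ lets me realize $S_{\tau R}$ exactly as an integral of $R$ over a set. Because $\int_{X_n}\tau\,d\mu \le \mu(X_n) < \infty$ and $\mu$ is non-atomic, the intermediate-value property of non-atomic measures yields a measurable $E_n \subset X_n$ with $\mu(E_n) = \int_{X_n}\tau\,d\mu$. Setting $E = \bigcup_n E_n$, one has $\int_E R_t\,d\mu = \sum_n \mu(E_n)\tilde R_n = S_{\tau R}$, so the middle term below vanishes. Applying Lemma \ref{cov4} once with $\sigma = \chi_E$ and once with $\sigma = \tau$, the triangle inequality gives
\[
\|\Phi(E) - S_{\tau T}\| \le \|S_{\chi_E T} - S_{\chi_E R}\| + \|S_{\chi_E R} - S_{\tau R}\| + \|S_{\tau R} - S_{\tau T}\| \le \ve + 0 + \ve = 2\ve,
\]
which is the desired approximation and finishes the argument once $\ve$ is sent to $0$.

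The computations here are all routine; the only points needing genuine care are the well-definedness of $S_{\tau T}$ as a bounded positive operator (already dispatched at the start of the proof of Lemma \ref{cov4}) and the verification that the countably-valued reduction makes $S_{\tau R}$ coincide \emph{exactly} with $\int_E R_t\,d\mu$ for a suitable $E$. I expect the main conceptual obstacle, as in the rank-one case, to be purely organizational: once Lemma \ref{cov4} replaces the bounded operator-valued map by a countably-valued one, the problem collapses to choosing sets $E_n$ of prescribed measure inside the pieces $X_n$, which is exactly where non-atomicity enters and where any appeal to the Kadison--Singer machinery is avoided.
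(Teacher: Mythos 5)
Your proposal is correct and follows essentially the same route as the paper: reduce to approximating each weighted operator $S_{\tau T}$ by some $\Phi(E)$, invoke Lemma \ref{cov4} to pass to a countably-valued family $\{R_t\}$, use non-atomicity to pick $E_n \subset X_n$ with $\mu(E_n) = \int_{X_n}\tau\,d\mu$ so that $S_{\tau R} = S_{\chi_E R}$ exactly, and finish with the triangle inequality. The only cosmetic difference is that you spell out the convexity-of-$\overline{\mathcal T}$ reduction that the paper merely cites from Theorem \ref{lyu}.
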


\begin{proof}
Without loss of generality, we can assume that $X$ is $\sigma$-finite. As in the proof of Theorem \ref{lyu}, it suffices to show that for any measurable function $\tau: X \to [0,1]$ and $\ve>0$, there exists a measurable set $E \subset X$ such that
\begin{equation}\label{cov9}
||\Phi(E) - S_{\tau T}||<\ve.
\end{equation}
Let $\{R_t\}_{t\in X}$ be compact operator-valued Bessel family from Lemma \ref{cov4}. Since it takes only countably many values, there exists a partition $\{X_n\}_{n\in\N}$ of its support $\{t\in X: R_t \ne \mathbf 0\}$ such that $t\mapsto R_t$ takes constant value $R_n$ on each $X_n$. By the Bessel condition we have $\mu(X_n)<\infty$. Define values $\tau_n = \int_{X_n} \tau d\mu$, $n\in\N$. Since $\mu$ is non-atomic, we can find subsets $E_n \subset X_n$  such that $\mu(E_n)=\tau_n$. Define $E= \bigcup_{n\in \N} E_n$. 
Then, we have
\[
S_{\tau R} = \sum_{n=1}^\infty \int_{X_n} \tau(t) R_t d\mu(t) 
= \sum_{n=1}^\infty \tau_n R_n =\sum_{n=1}^\infty \mu(E_n) R_n =
 \int_E R_t d\mu(t) = S_{\chi_E R}.
\]
Applying \eqref{cov6} twice for $\tau$ and $\chi_E$ yields
\[
||\Phi(E) - S_{\tau T}|| =
||S_{\chi_E T} - S_{\tau T}|| 
\le
||S_{\chi_E T} - S_{\chi_E R }||+||S_{\tau R } - S_{\tau T}|| \le 2\ve.
\]
Since $\ve>0$ is arbitrary, \eqref{cov9} is shown. 
\end{proof}

We finish by showing that the assumption that the Bessel family $\{T_t\}_{t\in X}$ in Theorem \ref{cov8} is compact-valued is necessary.

\begin{ex}\label{cov13}
Let $I=[0,1]$ be the unit interval with the Lebesgue measure. Define Rademacher functions
\[
r_n(t) = \operatorname{sgn} \sin (2^{n+1} \pi  t), \ t\in I, n\in \N.
\]
 For any sequence $a=(a_n)_{n\in \N} \in \ell^2(\N)$, we consider a diagonal operator $\operatorname{diag}(a)$ with respect to the standard o.n. basis of $\ell^2(\N)$.
Consider operator-valued mapping $T: I \to \mathcal B(\ell^2(\N))$ given by
\[
T_t = \operatorname{diag}( r_n(t)+1)_{n\in \N}.
\]
Clearly, $\{T_t\}_{t\in I}$ satisfies properties (i) and (ii) in Definition \ref{cov}. Moreover, each $T_t$, $t\in I$, is a positive self-adjoint operator (in fact a multiple of a diagonal projection), but it is not  a compact operator. Define a POVM $\Phi$ as in \eqref{cov8a}. Since each function $r_n$ takes values $\pm 1$ on a set of measure $\frac12$, we have $\Phi(I) = \mathbf I$. We claim that $\frac12 \mathbf I$ is not in the closure of the range of $\Phi$. Indeed, suppose otherwise. Hence, there would exist a measurable set $E \subset I$ such that 
\[
\|\Phi(E)  - \tfrac12 \mathbf I \|< \tfrac14.
\]
This implies that all diagonal entries of $\Phi(E)$ lie in the interval $(1/4,3/4)$. On the other hand,
$n^{\text{th}}$ diagonal entry of $\Phi(E)$ satisfies
\[
\int_E ( r_n(t)+1) dt = \langle r_n, \chi_E \rangle +1 \to 1 \qquad\text{as }n\to \infty.
\]
This is a contradiction. Hence, the closure of the range of $\Phi$ is not convex.
\end{ex}

Example \ref{cov13} illustrates how critical it is that $\{T_t\}_{t\in X}$ is a strongly measurable function. That is, the conclusion of Theorem \ref{cov8} holds true for a general positive operator-valued Bessel family $T: X \to \mathcal B(\mathcal H)$, which is strongly measurable instead of weakly measurable and compact-valued. Such mappings $T$ can be approximated by countably valued functions. The proof follows verbatim the proofs of Lemma \ref{cov4} and Theorem \ref{cov8}.

\bibliographystyle{amsplain}

\end{document}